\numberwithin{equation}{section}
\theoremstyle{plain}
\newtheorem{theorem}{Theorem}\numberwithin{theorem}{section}
\newtheorem{lemma}{Lemma}\numberwithin{lemma}{section}
\newtheorem{proposition}{Proposition}\numberwithin{proposition}{section}
\newtheorem{corollary}{Corollary}\numberwithin{corollary}{section}
\theoremstyle{definition}
\newtheorem{definition}{Definition}\numberwithin{definition}{section}
\theoremstyle{remark}
\newtheorem{remark}{Remark}\numberwithin{remark}{section}
\newcommand{\R}{\mathbb{R}}
\newcommand{\N}{\mathbb{N}}
\newcommand{\M}{\mathcal{M}}
\newcommand{\K}{\mathcal{K}}
\newcommand{\bx}{\mathbf{x}}
\newcommand{\by}{\mathbf{y}}
\title{Schoenberg Representations and Gramian Matrices of Mat\'ern Functions}
\author{Yong-Kum Cho
\and Dohie Kim
\and Kyungwon Park
\and Hera Yun
 }
\begin{document}

\maketitle

\bigskip

\begin{itemize}
\item[{}] {\bf Abstract.} We represent Mat\'ern functions in terms of Schoenberg's integrals
which ensure the positive definiteness and prove the systems of translates
of Mat\'ern functions form Riesz sequences in $L^2(\R^n)$ or Sobolev spaces.
Our approach is based on a new class of integral transforms that
generalize Fourier transforms for radial functions.
We also consider inverse multi-quadrics and obtain similar results.
\end{itemize}

\bigskip
{\small
\begin{itemize}
\item[{}]{\bf Keywords.} Bessel function, Fourier transform, Gramian matrix, Hankel-Schoenberg transform, inverse multi-quadrics, Mat\'ern function,
positive definite, Riesz sequence, Schoenberg matrix, Sobolev space.

\item[{}] 2010 Mathematics Subject Classification: 33C10, 41A05, 42B10, 60E10.
\end{itemize}}

\newpage

\section{Introduction}
In many areas of Mathematics, the functions of type
\begin{equation}
M_\alpha(z) = K_\alpha(z) z^\alpha\qquad(\alpha\in\R,\,z>0)
\end{equation}
arise frequently, referred to as the Mat\'ern functions, where
$K_\alpha(z)$ stands for the modified Bessel function of the second kind
of order $\alpha$.

Intimately connected is the family of functions of type
\begin{equation}
\phi_\beta(r) = (1+r^2)^{-\beta}\qquad(\beta>0, \,r\ge 0)
\end{equation}
whose radial extensions to the Euclidean spaces are referred to as the inverse multi-quadrics
in the theory of interpolations or spatial statistics.

In a fixed Euclidean space, both class of functions, if radially extended with suitably rearranged $\,\alpha, \beta,\,$
provide essential ingredients of Sobolev spaces.
In their pioneering work \cite{AK}, N. Aronszajn and K. T. Smith introduced the Sobolev space
$H^\alpha(\R^n),\,\alpha>0,\,$ as the space of Bessel potentials, that is, the convolutions
$\,(G_{\alpha/2}\ast u)(\bx),\,u\in L^2(\R^n),\,$ where $G_{\alpha/2}$ denotes the radial
extension of a special kind of Mat\'ern functions defined as follows.

\begin{definition} For a positive integer $n$ and $\,\alpha>0,$
\begin{equation}\label{G1}
G_\alpha(z) = \frac{1}{2^{\alpha-1 + \frac n2}\,\pi^{\frac n2}\,
\Gamma(\alpha)}\,K_{\alpha-\frac n2}(z) z^{\alpha - \frac n2}\qquad(z>0).
\end{equation}
For its radial extension to the Euclidean space $\R^n$, we write
$$G_\alpha(\bx) = G_\alpha(|\bx|), \quad |\bx| = \sqrt{\bx\cdot\bx}\qquad(\bx\in\R^n).$$
\end{definition}

A characteristic feature of the kernel $G_\alpha$ is the Fourier transform
\begin{align*}
\widehat{G_\alpha}(\xi) =\int_{\R^n} e^{-i \xi\cdot\bx}\,G_\alpha(\bx) d\bx= \left( 1+|\xi|^2\right)^{-\alpha}\,,
\end{align*}
which, together with the intrinsic properties of $K_{\alpha-n/2}$, enables the authors to obtain a comprehensive list of functional properties.
Let us state only a few of their list which are relevant to the present work (see also \cite{C}).

\begin{itemize}
\item[(a)] The Sobolev space $H^\alpha(\R^n)$ is identified with
\begin{equation*}
H^\alpha(\R^n) = \left\{ u\in L^2(\R^n) : \int_{\R^n} \left( 1+|\xi|^2\right)^{\alpha}|\widehat{u}(\xi)|^2 d\xi <\infty
\right\}.
\end{equation*}
In particular, $\,G_\beta\in H^\alpha(\R^n)\,$ if and only if $\,\beta>(2\alpha +n)/4.$
\item[(b)] $\,\left(G_\alpha\ast G_\beta\right)(\bx) = G_{\alpha +\beta}(\bx)\,$ for $\,\alpha>0,\,\beta>0.$
\item[(c)] In the case $\,\alpha>n/2,\,$ $G_{\alpha}$ is positive definite on $\R^n$.
The symmetric kernel $G_{\alpha}(\bx - \mathbf{y})$
is in fact a reproducing kernel for the Hilbert space $H^\alpha(\R^n)$ under the inner product
$$\big(u, v\big)_{H^\alpha(\R^n)} =
(2\pi)^{-n}\int_{\R^n} \widehat{u}(\xi)\,\overline{\widehat{v}(\xi)}\,(1+|\xi|^2)^\alpha\,d\xi.$$
\end{itemize}

Our primary purpose in the present work is to obtain a set of invariants for both classes
of functions, that is, those properties valid in any Euclidean space, related with
the positive definiteness and Fourier transforms.

We recall that a univariate function $\phi$ defined on the interval $[0, \infty)$
is said to be {\it positive semi-definite on $\R^n$}
if it satisfies
\begin{align}\label{G3}
\sum_{j=1}^N\sum_{k=1}^N \,\phi\left(\left|\bx_j - \bx_k\right|\right)\alpha_j \overline{\alpha_k}\,\ge\, 0
\end{align}
for any choice of $\,\alpha_1, \cdots, \alpha_N\in\mathbb{C}\,$ and
distinct points $\,\bx_1, \cdots, \bx_N\in\R^n,\,$ where $N$ is arbitrary. If equality in \eqref{G3} holds only if
$\,\alpha_1=\cdots=\alpha_N=0,\,$ then it is said to be {\it positive definite on $\R^n$}.

A univariate function which is positive semi-definite or positive definite on every $\R^n$
takes the following specific form:

\begin{itemize}
\item[{}]{\bf Criterion I} (I. J. Schoenberg \cite{Sc2}).
{\it A continuous function $\phi$ on $[0, \infty)$ is
positive semi-definite on every $\R^n$ if and only if
\begin{equation}\label{G4}
\phi(r) = \int_0^\infty e^{-r^2 t}\,d\nu(t)
\end{equation}
for a finite positive Borel measure $\nu$ on $[0, \infty).$
Moreover, if $\nu$ is not concentrated at zero,
then $\phi$ is positive definite on every $\R^n$. }
\end{itemize}

Due to the representation formula
\begin{equation}\label{G5}
 \phi_\beta(r) =  \frac{1}{\Gamma(\beta)}\int_0^\infty e^{-r^2 t}\,e^{-t} t^{\beta -1} dt\qquad(\beta>0),
\end{equation}
it is well known that each $\phi_\beta$ is positive definite on every $\R^n$ (see e.g. \cite{We}).

Our preliminary observation is the following.

\smallskip

\begin{theorem}\label{theorem1.1}
For $\,\alpha>0,\,$ we have
\begin{equation*}
\frac{2^{1-\alpha}}{\Gamma(\alpha)}\,K_\alpha(z) z^\alpha = \int_0^\infty e^{-z^2 t}\,f_\alpha(t) dt\qquad(z\ge 0),
\end{equation*}
where $f_\alpha$ denotes the probability density defined by
$$ f_\alpha(t) = \frac{1}{2^{2\alpha}\Gamma(\alpha)}\,\exp\left(-\frac{1}{4t}\right) t^{-\alpha-1}.$$
As a consequence, $M_\alpha$ is positive definite on every $\R^n$.
\end{theorem}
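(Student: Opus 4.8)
The plan is to reduce the claimed identity to a classical integral representation of the modified Bessel function and then invoke Criterion I. First I would recall the Macdonald-type formula
\[
K_\nu(z) = \frac{1}{2}\left(\frac{z}{2}\right)^\nu \int_0^\infty \exp\left(-s - \frac{z^2}{4s}\right) s^{-\nu-1}\, ds \qquad (z>0),
\]
valid for every real $\nu$. Since $K_\alpha = K_{-\alpha}$, I would apply this with $\nu = -\alpha$ rather than $\nu = \alpha$; this is the one genuinely delicate point, because only the choice $\nu=-\alpha$ produces, after the change of variables below, the exponent $-\alpha-1$ on the integration variable that appears in $f_\alpha$. This gives
\[
K_\alpha(z)\, z^\alpha = 2^{\alpha-1}\int_0^\infty \exp\left(-s - \frac{z^2}{4s}\right) s^{\alpha-1}\, ds.
\]

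Next I would perform the substitution $s = 1/(4t)$, under which $z^2/(4s) = z^2 t$ and $s^{\alpha-1}\, ds = -4^{-\alpha}\, t^{-\alpha-1}\, dt$, the limits reversing. A short bookkeeping of the powers of $2$ (using $2^{\alpha-1}\cdot 4^{-\alpha} = 2^{-\alpha-1}$) then yields
\[
K_\alpha(z)\, z^\alpha = \frac{1}{2^{\alpha+1}}\int_0^\infty e^{-z^2 t}\, \exp\left(-\frac{1}{4t}\right) t^{-\alpha-1}\, dt,
\]
and multiplying by $2^{1-\alpha}/\Gamma(\alpha)$ reproduces exactly the stated formula with the density $f_\alpha$. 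To confirm that $f_\alpha$ is a probability density I would verify $\int_0^\infty f_\alpha(t)\, dt = 1$: either directly, via the substitution $u = 1/(4t)$, which turns the integral into $4^\alpha\Gamma(\alpha)$ and cancels the prefactor, or by letting $z\to 0^+$ in the identity and using $K_\alpha(z)\, z^\alpha \to 2^{\alpha-1}\Gamma(\alpha)$, which simultaneously shows that the representation extends continuously to $z=0$ with value $1$.

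Finally, for the positive definiteness, I would set $\phi(r) = 2^{1-\alpha}\Gamma(\alpha)^{-1} M_\alpha(r)$, which is continuous on $[0,\infty)$ by the previous step, and read the established identity as the Schoenberg representation $\phi(r) = \int_0^\infty e^{-r^2 t}\, d\nu(t)$ with $d\nu(t) = f_\alpha(t)\, dt$. Since $\nu$ is a finite positive Borel measure of total mass $1$ whose density is strictly positive on $(0,\infty)$, it is certainly not concentrated at zero; Criterion I then shows that $\phi$, hence $M_\alpha$ itself up to the positive constant $2^{1-\alpha}/\Gamma(\alpha)$, is positive definite on every $\R^n$. The main obstacle is essentially the correct normalization: choosing the order $-\alpha$ in the Bessel representation and tracking the constants through the substitution so that both the power $t^{-\alpha-1}$ and the factor $2^{-2\alpha}/\Gamma(\alpha)$ emerge precisely as in $f_\alpha$. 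Everything else---convergence of the integrals for $z\ge 0$, continuity at the origin, and the application of Criterion I---is routine once the identity is in place.
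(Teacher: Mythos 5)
Your proof is correct and takes essentially the same route as the paper: the paper's Lemma \ref{lemmaS0} derives precisely your intermediate identity $K_\alpha(z)z^\alpha = 2^{\alpha-1}\int_0^\infty \exp\left(-s-\frac{z^2}{4s}\right)s^{\alpha-1}\,ds$ from Schl\"afli's integral \eqref{K4} via the substitution $ze^{-t}=2s$ (where you instead cite it as the classical Macdonald representation with $\nu=-\alpha$ and $K_{-\alpha}=K_\alpha$), and then performs the same substitution $s=1/(4t)$ and invokes Schoenberg's Criterion I exactly as you do. Your constant bookkeeping, normalization check for $f_\alpha$, and continuity at $z=0$ all match the paper's argument, so nothing is missing.
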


\smallskip

In order to find direct relationships between the functions $M_\alpha$ and $\phi_\beta$, without recourse to
their Euclidean extensions, we shall introduce a new class of integral transforms that incorporates Fourier transforms
for radial measures and Hankel transforms in certain sense.

\begin{definition}\label{def1}
For $\,\lambda>-1,\,$ let $J_\lambda$ denote the Bessel function of the first kind of order $\lambda$ and
define $\,\Omega_\lambda : \R\to \R\,$ by
\begin{align*}
\Omega_\lambda(t) &= \Gamma(\lambda+1)\left(\frac t2\right)^{-\lambda} J_\lambda(t)\\
&=\Gamma(\lambda+1)\sum_{k=0}^\infty\frac{(-1)^k}{k!\,\Gamma(\lambda +k +1)}\,\left(\frac t 2\right)^{2k}.
\end{align*}
\end{definition}

In the special case $\,\lambda = (n-2)/2\,,$ with $n$ a positive integer, $\Omega_\lambda$
arises on consideration of the Fourier transforms for radial functions on $\R^n$. To be specific,
if $F$ is integrable with $\,F(\bx)= f(|\bx|)\,$ for some univariate function $f$ on $[0, \infty)$,
then it is well known (see e.g. \cite{Stw}) that
\begin{align}\label{G6}
\widehat{F}(\xi) &= (2\pi)^{n/2} |\xi|^{-\frac{n-2}{2}}\int_0^\infty J_{\frac{n-2}{2}}
(|\xi|t) f(t) t^{n/2}dt\nonumber\\
&=\frac{2\pi^{n/2}}{\Gamma(n/2)} \int_0^\infty \Omega_{\frac{n-2}{2}}(|\xi|t) f(t) t^{n-1} dt.
\end{align}

More extensively, I. J. Schoenberg noticed that the Fourier transform of any radial measure on $\R^n$
is also representable in the above form and set up the following characterization (see also H. Wendland \cite{We}).

\begin{itemize}
\item[{}]{\bf Criterion II} (I. J. Schoenberg \cite{Sc1}, \cite{Sc2}).
{\it A continuous function $\phi$ on $[0, \infty)$ is
positive semi-definite on $\R^n$ if and only if
\begin{equation}\label{G7}
\phi(r) = \int_0^\infty \Omega_{\frac{n-2}{2}}(rt) d\nu(t)
\end{equation}
for a finite positive Borel measure $\nu$ on $[0, \infty)$.
Moreover, in the case when $\,d\nu(t) = f(t) t^{n-1} dt\,$ with continuous $f$,
$\phi$ is positive definite on $\R^n$ if and only if $\phi$ is nonnegative and non-vanishing.
}
\end{itemize}

Our generalization of Schoenberg's integrals or Fourier transforms for radial measures
takes the following form.

\begin{definition}
The Hankel-Schoenberg transform of order $\,\lambda>-1\,$
of a finite positive Borel measure $\nu$ on $[0, \infty)$ is defined by
\begin{equation*}
\phi(r) = \int_0^\infty\Omega_\lambda(rt)\,d\nu(t)\qquad(0\le r<\infty).
\end{equation*}
\end{definition}

For those Borel measures on $[0, \infty)$ which are absolutely
continuous with respect to Lebesgue measure, it is simple to express the Hankel-Schoenberg transforms
in terms of the classical Hankel transforms for which analogues of
the Fourier inversion theorem and Parseval's relations are available.

Our evaluations will be of the form
\begin{align}
\left(1+r^{2}\right)^{-\alpha-\lambda -1}  = c(\alpha, \lambda)\int_{0}^{\infty}\Omega_{\lambda}(rt)\big[K_{\alpha}(t)t^{\alpha}\big]t^{2\lambda+1}dt
\end{align}
for $\,\alpha+\lambda+1>0\,$ with an explicit positive constant $c(\alpha, \lambda)$.
By inversions and order-changing transforms, we shall obtain
a number of representation formulas for the Mat\'ern functions $M_\alpha$ in terms of $\phi_\beta$'s and
vice versa, which suits to Schoenberg's criterion and makes it possible to find
the Fourier transforms of their radial extensions to any Euclidean space.

In accordance with the notation of \cite{GMO}, we introduce

\begin{definition}
For a univariate function $\phi$ on $[0, \infty)$ and a set of distinct
points $\,X= \{\bx_j\}_{j\in\N}\subset\R^n,\,$
the Schoenberg matrix is defined to be
\begin{equation}
\mathbf{S}_X (\phi) = \Big[ \phi\big(\left|\bx_j - \bx_k\right|\big)\Big]_{j, \,k\in\mathbb{N}}.
\end{equation}
\end{definition}

The notion of Schoenberg matrix comes up instantly with an attempt to
construct an interpolating functional that matches the values of any function at each point of $X$.
To state briefly, if $\mathbf{S}_X (\phi)$ defines a bounded invertible operator on the space
$\ell^2(\N)$, then it is possible to construct a Lagrange-type radial basis sequence
$\,\left\{u_j^*\right\}_{j\in\N}\,$ by setting
$$u_j^*(\bx) = \sum_{k=1}^\infty c_{j, k}\,\phi(|\bx- \bx_k|),\quad j=1,2, \cdots, $$
and solving the infinite system $\,u_j^*(\bx_k) = \delta_{j, k},\,$ which has a unique solution
$\,\mathbf{c}_j = (c_{j, 1}, c_{j, 2},\cdots)\in\ell^2(\N)\,$ for each $j$. The functional
$$A_X(f)(\bx) =  \sum_{j=1}^\infty f(\bx_j)\,u_j^*(\bx),$$
definable on any class of functions, obviously interpolates $f$ at $X.$

The Schoenberg matrices arise under various guises in other fields of Mathematics.
In Functional Analysis, for example, it is common that $\mathbf{S}_X (\phi)$ coincides with
the Gramian matrix of a sequence obtained by translating another function $\psi$ by $X$
in an appropriate Hilbert space $H$, that is,
$$\mathbf{S}_X (\phi) = \Big[\big(\psi(|\cdot - \bx_j|),\,\psi(|\cdot - \bx_k|)\big)_H\Big]_{j, k\in\N}.$$
In such a circumstance, $\,\left\{\psi(|\bx- \bx_j|)\right\}_{j\in\N}\,$
is a Riesz sequence in $H$ if and only if $\mathbf{S}_X (\phi)$ defines a bounded invertible operator on $\ell^2(\N)$.

Our secondary purpose is to study the Schoenberg or Gramian matrices associated with
the Mat\'ern functions $M_\alpha$ as well as the functions $\phi_\beta$ with our focuses on their
boundedness and invertibility on $\ell^2(\N)$.

Our approaches are substantially based on the recent developments
\cite{GMO}, \cite{MS} of L. Golinskii {\it et al.} in which a list of
criteria for the boundedness and invertibility are established from several perspectives.
To illustrate, the authors devoted considerable portions of their work in studying
the $L^2$-based Gramian matrices associated to the Mat\'ern functions $M_\alpha$
and obtained their boundedness and invertibility on $\ell^2(\N)$ in the range $\,-n/4<\alpha\le 0.$

As we shall present below, we shall improve their results by extending the range to
$\,\alpha>-n/4\,$ and the boundedness and invertibility results to the aforementioned
Sobolev space-based Gramian matrices. In applications, we shall prove that the system of type
$\,\big\{M_\alpha(|\bx-\bx_j|)\big\}_{j\in\N},\,$
where $\,(\bx_j)\,$ is an arbitrary set of distinct points of $\R^n$, is a Riesz sequence
in $L^2(\R^n)$ or the Sobolev space of certain specified order.

In the same manner, the system of type $\,\big\{\phi_\beta(|\bx-\bx_j|)\big\}_{j\in\N}\,$ will be shown to be
a Riesz sequence in the Hilbert space of functions on $\R^n$ for which $\,\phi_\beta(|\bx-\by|)\,$ is a reproducing
kernel.

\section{Bessel functions $K_\alpha$}
In this section we collect some of the basic properties of $K_\alpha$ relevant to
the present work, most of which can be found in \cite{AS}, \cite{E} and \cite{Wa}.

For $\,\alpha\in\mathbb{C},\,$ the modified Bessel function $K_\alpha$ is defined by
\begin{align}
K_\alpha(z) &= \frac{\pi}{2}\left[\frac{\,I_{-\alpha}(z)-I_{\alpha}(z)\,}{\sin\left(\alpha\pi\right)}\right],\quad\text{where}\\
I_{\alpha}(z) &= \sum_{k=0}^{\infty}\frac{1}{k!\,\Gamma\left(k+\alpha+1\right)}\left(\frac{z}{2}\right)^{2k +\alpha}.\label{K1}
\end{align}
In the case when $\alpha$ happens to be an integer, $\,\alpha=n,\,$ this formula should be interpreted as
$\,K_n(z) = \lim_{\alpha\to\, n} K_\alpha(z).\,$ The Bessel functions $\,I_\alpha, \,K_\alpha\,$ form a fundamental
system of solutions to the differential equation
\begin{equation}\label{K2}
z^2\frac{d^2 u}{dz^2} + z \frac{du}{dz} - (z^2 + \alpha^2) u = 0.
\end{equation}
Hereafter, we shall be concerned only with $\,\alpha\in\R\,$ and $\,z>0.$

\begin{itemize}
\item[(K1)] By definition, it is evident $\,K_{-\alpha}(z) = K_\alpha(z).\,$ For each integer $n$,
a series expansion formula for $K_n(z)$ is also available. In particular,
\begin{equation}\label{K5}
K_0(z) = -\log (z/2) I_0(z) + \sum_{k=0}^\infty \frac{\psi(k+1)}{(k!)^2} \left(\frac z2\right)^{2k},
\end{equation}
where $\psi$ denotes the digamma function so that
$$\psi(1) = -\gamma, \quad \psi(k+1) = -\gamma + \sum_{j=1}^k\frac 1j\,,$$
with $\gamma$ being the Euler-Mascheroni constant.

\item[(K2)] For $\,\alpha>-1/2\,$ and $\,z>0,$ Schl\"afli's integrals state
\begin{align}\label{K3}
K_{\alpha}(z) &= \frac{\sqrt{\pi}}{\Gamma(\alpha + 1/2)}\,\left(\frac{z}{2}\right)^{\alpha}
\int_{1}^{\infty} e^{-zt}\left(t^{2}-1\right)^{\alpha-\frac{1}{2}}\,dt\nonumber\\
&=\sqrt{\frac{\pi}{2}\,} \frac{e^{-z} z^\alpha}{\Gamma(\alpha+1/2)}\,
\int_{0}^{\infty} e^{- zt} \left[ t\left( 1 + \frac t2\right)\right]^{\alpha - \frac 12}\,dt
\end{align}
in which the latter follows from the former by suitable substitutions.
Another form of Schl\"afli's integral reads
\begin{equation}\label{K4}
K_\alpha(z) = \frac 12 \int_{-\infty}^\infty e^{-z \cosh t - \alpha t}\, dt,
\end{equation}
which holds for any real $\alpha$ and $\,z>0.$ As a consequence, $K_\alpha(z)$ is
positive on the interval $(0, \infty)$.

\item[(K3)] From the differential equation \eqref{K2}, it follows plainly
$$\frac{d}{dz} \big[K_\alpha(z) z^\alpha\big] = - K_{\alpha -1}(z) z^\alpha.$$
By (K2), hence, the Mat\'ern function $\,M_\alpha(z) = K_\alpha(z) z^\alpha\,$ is positive and strictly decreasing
on the interval $(0, \infty)$.

\item[(K4)] Of great significance is the asymptotic behavior of $K_\alpha$ for $\,\alpha\ge 0.$
\begin{itemize}
\item[(i)] As $\,z\to 0,\,$ the series expansions \eqref{K1} and \eqref{K5} yield\footnote{To be more precise, \eqref{K1} shows
$$K_\alpha(z) = 2^{\alpha-1}\Gamma(\alpha) z^{-\alpha}\big[ 1 + O\left( z^{\alpha_*}\right)\big],$$
where $\,\alpha_* = \min (2\alpha, \,2)\,$ and \eqref{K5} shows
$$K_0(z) = -\log z + \log 2 -\gamma + \big[1-\log(z/2)\big] O\left(z^2\right).$$}
\begin{equation*}
K_\alpha(z) \,\sim\,\left\{\begin{aligned} &{2^{\alpha-1}\Gamma(\alpha) z^{-\alpha}} &{\quad\text{for}\quad \alpha>0},\\
&{-\log z} &{\quad\text{for}\quad \alpha =0}.\end{aligned}\right.
\end{equation*}
\item[(ii)] As $\,z\to\infty,$ a version of Hankel's asymptotic formula states
\begin{align*}
K_{\alpha}(z) = \sqrt{\frac{\pi}{2 z}\,}\,e^{-z}\left[ 1 + \frac{4\alpha^{2}-1}{8z} + O\left(\frac{1}{z^2}\right)\right].
\end{align*}
\end{itemize}

\item[(K5)] In the special case $\,\alpha = n + 1/2\,$ with $n$ an integer, it is simple to express
$K_\alpha$, and hence the Mat\'ern function $M_\alpha$, in closed forms on evaluation of Schl\"afli's integral \eqref{K3}.
To state $M_\alpha$ explicitly,
\begin{align}\label{K6}
M_{n+ \frac 12}(z) &= \sqrt{\frac{\pi}{2}\,}\,e^{-z} z^{n}\sum_{k=0}^n \frac{(n+k)!}{k! (n-k)!}\,(2z)^{-k},\nonumber\\
M_{-n-\frac 12} (z) &= \sqrt{\frac{\pi}{2}\,}\,e^{-z} z^{-n-1}\sum_{k=0}^n \frac{(n+k)!}{k! (n-k)!}\,(2z)^{-k},
\end{align}
where $n$ is a nonnegative integer. A list of positive orders reads
\begin{align}
M_{\frac 12}(z)&=  \sqrt{\frac{\pi}{2}\,}\, e^{-z}\,,\nonumber\\
M_{\frac 32}(z)&=  \sqrt{\frac{\pi}{2}\,}\,(1+z)e^{-z}\,,\nonumber\\
M_{\frac 52}(z)&= \sqrt{\frac{\pi}{2}\,}\,\left(3 + 3z+ z^{2}\right)e^{-z}
\end{align}
which are of considerable interest in spatial statistics (see \cite{G1}, \cite{G2}).
A list of negative orders reads
\begin{align}
M_{-\frac 12}(z)&=  \sqrt{\frac{\pi}{2}\,} \,\frac{e^{-z}}{z}\,,\nonumber\\
M_{-\frac 32}(z)&=  \sqrt{\frac{\pi}{2}\,}\,\left(\frac{1}{z^2} + \frac{1}{z^3}\right)e^{-z}\,,\nonumber\\
M_{-\frac 52}(z)&= \sqrt{\frac{\pi}{2}\,}\,\left(\frac{1}{z^3} + \frac{3}{z^4} +\frac{3}{z^5}\right)e^{-z}\,.
\end{align}
\end{itemize}

\section{Hankel-Schoenberg transforms}
The purpose of this section is to establish basic properties of the
Hankel-Schoenberg transforms which will be used subsequently.

To begin with, we list the following properties on the kernels $\Omega_\lambda$
which are deducible from those on the Bessel functions $J_\lambda$
(\cite{E}, \cite{Wa}).

\begin{itemize}
\item[(J1)] Each $\Omega_\lambda$ is of class $C^\infty(\R)$, even and uniformly bounded by $\,1=\Omega_\lambda(0).\,$
A theorem of Bessel-Lommel states that it is an oscillatory function with an infinity of positive simple zeros.
A modification of Hankel's asymptotic formula for $J_\lambda$ shows that as $\,t\to\infty,$
\begin{equation*}
\Omega_\lambda(t) = \frac{\Gamma(\lambda+1)}{\sqrt{\pi}} \left(\frac t2\right)^{-\lambda -1/2}
\left[\cos\left(t - \frac{(2\lambda+1)\pi}{4}\right) + O\left( t^{-1}\right)\right].
\end{equation*}

\item[(J2)] For $\,\lambda>-1/2,\,$ Poisson's integral reads
\begin{align*}
\Omega_\lambda(t) = \frac{2}{B\left(\lambda + 1/2\,,\,1/2\right)}\,
\int_0^1 \cos(t s)\, (1-s^2)^{\lambda -\frac 12}\,dt,
\end{align*}
where $B$ stands for the Euler beta function defined by
$$B(a, \,b) = \int_0^1 t^{a-1} (1-t)^{b-1} dt\qquad(a>0, \,b>0).$$

\item[(J3)] By Liouville's theorem, $\Omega_\lambda$ is expressible
in finite terms by algebraic and trigonometric functions
if and only if $2\lambda$ is an odd integer. Indeed,
the Lommel-type recurrence formula
\begin{align*}
\Omega_\lambda(t) -\Omega_{\lambda-1}(t)
 = \frac{t^2}{\,4\lambda(\lambda+1)\,}\,\Omega_{\lambda+2}(t)
\qquad(\lambda>-1)
\end{align*}
may be used to evaluate $\Omega_{n + 1/2}$ for any integer $n$ together with
$$\Omega_{-\frac 12}(t) = \cos t,\quad \Omega_{\frac 12}(t) = \frac{\sin t}{t}\,.$$
\end{itemize}

The Hankel transforms of a function $f$ refer to the integrals
$$\int_0^\infty J_\lambda(rt) f(t) t dt\qquad(\lambda\in\mathbb{C}).$$
It follows by definition that the Hankel-Schoenberg transforms
can be written in terms of the Hankel transforms whenever $\nu$ admits an integrable density $f$,
that is, $\,d\nu(t) = f(t) dt.\,$
The Hankel-Watson inversion theorem (\cite{Wa}) states that if $\,\lambda\ge -1/2\,$
and $\,f(t)\sqrt t\,$ is integrable on $[0, \infty)$, then
\begin{equation*}
\int_0^\infty J_\lambda(rt)\left[\int_0^\infty J_\lambda(ru) f(u)u du\right] rdr
= \frac{\,f(t+0) + f(t-0)\,}{2}
\end{equation*}
at every $\,t>0\,$ such that $f$ is of bounded variation in a neighborhood of $t$.

An obvious modification yields the following inversion formula which may serve as an alternative of
the Fourier inversion theorem for radial functions.

\begin{theorem}\label{inversion} {\rm (Inversion)}
For $\,\lambda\ge -1/2,\,$ assume that
\begin{equation}\label{invc1}
\int_0^\infty |f(t)| t^{-\lambda-1/2}\,dt <\infty.
\end{equation}

Then the following holds for every $\,t>0\,$ at which $f$ is continuous:
\begin{align*}
\left\{\aligned &{\phi(r) = \int_0^\infty \Omega_\lambda(rt) f(t) dt\quad \text{implies}}\\
 &{f(t) = \frac{t^{2\lambda+1}}{4^\lambda\left[\Gamma(\lambda+1)\right]^2}\,
 \int_0^\infty \Omega_\lambda(rt)\, \phi(r) r^{2\lambda+1}dr.}\endaligned\right.
\end{align*}
\end{theorem}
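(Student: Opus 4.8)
The plan is to reduce the claim to the classical Hankel--Watson inversion theorem quoted above by absorbing the power weights that relate $\Omega_\lambda$ to $J_\lambda$. Writing out Definition~\ref{def1} as $\Omega_\lambda(rt) = \Gamma(\lambda+1)(rt/2)^{-\lambda}J_\lambda(rt)$, I would first rewrite
$$\phi(r) = \Gamma(\lambda+1)\left(\frac r2\right)^{-\lambda}\int_0^\infty J_\lambda(rt)\,t^{-\lambda}f(t)\,dt.$$
The key observation is that the substitution $g(t) = t^{-\lambda-1}f(t)$ turns $t^{-\lambda}f(t)\,dt$ into $g(t)\,t\,dt$, so that the inner integral is exactly the classical Hankel transform $(\mathcal H_\lambda g)(r) = \int_0^\infty J_\lambda(rt)\,g(t)\,t\,dt$. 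Hence $\phi(r) = \Gamma(\lambda+1)(r/2)^{-\lambda}(\mathcal H_\lambda g)(r)$, equivalently $(\mathcal H_\lambda g)(r) = \Gamma(\lambda+1)^{-1}(r/2)^\lambda\phi(r)$.

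Next I would check that hypothesis \eqref{invc1} is precisely the integrability required to apply the Hankel--Watson theorem to $g$: the condition there is that $g(t)\sqrt t$ be integrable, and $|g(t)|\sqrt t = |f(t)|\,t^{-\lambda-1/2}$, which is finite by assumption. Applying the self-inversion of the Hankel transform then gives, at a point where $g$ is continuous,
$$g(t) = \int_0^\infty J_\lambda(rt)\,(\mathcal H_\lambda g)(r)\,r\,dr = \frac{1}{\Gamma(\lambda+1)}\int_0^\infty J_\lambda(rt)\left(\frac r2\right)^\lambda\phi(r)\,r\,dr.$$
Substituting back $J_\lambda(rt) = \Gamma(\lambda+1)^{-1}(rt/2)^\lambda\Omega_\lambda(rt)$ and collecting the factors of $2$ via $(rt/2)^\lambda(r/2)^\lambda = 4^{-\lambda}t^\lambda r^{2\lambda}$ yields
$$g(t) = \frac{t^\lambda}{4^\lambda\,\Gamma(\lambda+1)^2}\int_0^\infty\Omega_\lambda(rt)\,\phi(r)\,r^{2\lambda+1}\,dr.$$
Multiplying through by $t^{\lambda+1}$ to recover $f(t)=t^{\lambda+1}g(t)$ produces the stated formula, with the constant $t^{2\lambda+1}/(4^\lambda\Gamma(\lambda+1)^2)$. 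Note that the convergence of the final integral is inherited directly from the convergent outer integral in Watson's iterated formula, so no separate convergence argument is needed.

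The bookkeeping of the weights is routine; the point that needs genuine care is the transfer of regularity hypotheses. The Hankel--Watson theorem as quoted returns $\tfrac12[g(t+0)+g(t-0)]$ under a bounded-variation assumption near $t$, whereas the statement here asks only for continuity of $f$ at $t$. I would therefore invoke the form of the inversion that recovers the value $g(t)$ at points of continuity, observing that continuity of $f$ at $t>0$ is equivalent to continuity of $g(t)=t^{-\lambda-1}f(t)$ there. I expect this regularity matching, rather than any of the algebra, to be the main obstacle, since it is exactly where the ``obvious modification'' of Watson's theorem must be made precise.
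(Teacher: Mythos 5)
Your proposal is correct and coincides with the paper's approach: the paper gives no written proof at all, merely asserting that the theorem follows from the quoted Hankel--Watson inversion by an ``obvious modification,'' and your substitution $g(t)=t^{-\lambda-1}f(t)$ together with the weight bookkeeping (hypothesis \eqref{invc1} being exactly the integrability of $g(t)\sqrt{t}$, and the constant $t^{2\lambda+1}/\bigl(4^{\lambda}[\Gamma(\lambda+1)]^{2}\bigr)$ coming out correctly) is precisely that modification carried out in full. Your closing concern about matching continuity of $f$ against the bounded-variation hypothesis in Watson's theorem is legitimate, but it is a point the paper itself silently glosses over in exactly the same way.
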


A version of Parseval's theorem is deducible from its equivalent for the Hankel transforms
in a trivial manner.

\begin{theorem}\label{Parseval}
{\rm (Parseval's relation)}
For $\,\lambda>-1\,,$ let
\begin{align*}
\phi_j(r) = \int_0^\infty\Omega_\lambda(rt) f_j(t) t^{2\lambda +1} dt,\quad j=1, 2.
\end{align*}
If both integrals are absolutely convergent, then
\begin{equation*}
\int_0^\infty f_1(t) f_2(t) t^{2\lambda+1} dt = \frac{1}{4^\lambda\left[\Gamma(\lambda+1)\right]^2}\,
\int_0^\infty \phi_1(r) \phi_2(r) r^{2\lambda+1} dr.
\end{equation*}
\end{theorem}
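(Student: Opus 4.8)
The plan is to reduce the claimed identity to the classical Parseval (Plancherel) relation for the Hankel transform of order $\lambda$, which is itself a formal consequence of the Hankel--Watson inversion quoted just above. The first step is to strip the normalizing factor off the kernel. Writing out Definition~\ref{def1}, namely $\Omega_\lambda(rt) = \Gamma(\lambda+1)\,(rt/2)^{-\lambda} J_\lambda(rt)$, and setting $g_j(t) = t^\lambda f_j(t)$, one obtains
\[
\phi_j(r) = 2^\lambda\,\Gamma(\lambda+1)\,r^{-\lambda}\int_0^\infty J_\lambda(rt)\,g_j(t)\,t\,dt
= 2^\lambda\,\Gamma(\lambda+1)\,r^{-\lambda}\,(H_\lambda g_j)(r),
\]
where $H_\lambda g(r)=\int_0^\infty J_\lambda(rt)\,g(t)\,t\,dt$ denotes the classical Hankel transform. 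The absolute convergence hypothesis on the defining integrals is exactly what guarantees that each $H_\lambda g_j$ is well defined pointwise, and inverting the relation gives $(H_\lambda g_j)(r) = r^{\lambda}\phi_j(r)\big/\big(2^\lambda\Gamma(\lambda+1)\big)$.

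Next I would record the Hankel--Parseval identity
\[
\int_0^\infty g_1(t)\,g_2(t)\,t\,dt = \int_0^\infty (H_\lambda g_1)(r)\,(H_\lambda g_2)(r)\,r\,dr,
\]
which expresses that $H_\lambda$ is self-reciprocal on $L^2\big((0,\infty),\,t\,dt\big)$. For $\lambda\ge -1/2$ this follows directly from the Hankel--Watson inversion theorem: inserting the representation of one factor as the $H_\lambda$-transform of its own transform and interchanging the order of integration (legitimate under the stated absolute convergence) collapses the double integral to the right-hand side. For the remaining range $-1<\lambda<-1/2$ the same relation is the standard $L^2$-unitarity of $H_\lambda$, valid for all $\lambda>-1$ (\cite{Wa}).

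Finally I would substitute back. On the left, $g_1 g_2 t = f_1 f_2\,t^{2\lambda+1}$, so that side becomes $\int_0^\infty f_1(t) f_2(t)\,t^{2\lambda+1}\,dt$. On the right, replacing each $(H_\lambda g_j)(r)$ by $r^\lambda\phi_j(r)\big/\big(2^\lambda\Gamma(\lambda+1)\big)$ produces the factor $1\big/\big(4^\lambda[\Gamma(\lambda+1)]^2\big)$ together with the weight $r^{2\lambda}\cdot r = r^{2\lambda+1}$, which is precisely the asserted formula. The computation is pure constant- and exponent-bookkeeping; the only genuine point requiring care is the applicability of the classical Hankel--Parseval step, that is, confirming that absolute convergence of the defining integrals (or, in the $L^2$-formulation, membership of $g_j$ in $L^2(t\,dt)$) suffices to justify the Fubini interchange. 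Since the paper regards this reduction as routine, I do not expect any deeper obstacle beyond tracking these hypotheses.
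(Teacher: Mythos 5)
Your proposal is correct and is exactly the route the paper intends: the paper offers no written proof beyond the remark that the result is ``deducible from its equivalent for the Hankel transforms in a trivial manner,'' and your reduction---writing $\Omega_\lambda(rt)=\Gamma(\lambda+1)(rt/2)^{-\lambda}J_\lambda(rt)$, setting $g_j(t)=t^\lambda f_j(t)$ so that $\phi_j(r)=2^\lambda\Gamma(\lambda+1)r^{-\lambda}(H_\lambda g_j)(r)$, and invoking the classical Hankel--Parseval identity---is precisely that reduction, with the constants $4^\lambda[\Gamma(\lambda+1)]^2$ and the weight $r^{2\lambda+1}$ checking out. Your closing caveat about justifying the Fubini interchange under the stated hypotheses matches the level of rigor the paper itself adopts, so nothing further is needed.
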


\begin{lemma}\label{basic}
For $\,\lambda>\rho>-1\,$ and $\,r\ge 0,$
\begin{equation*}
\Omega_\lambda(r) = \frac{2}{B(\rho +1, \,\lambda-\rho)} \int_0^\infty\Omega_\rho(rt)
(1-t^2)_+^{\lambda-\rho-1}t^{2\rho+1} dt.
\footnote{As usual, we write $\,x_+ = \max\,(x, 0)\,$ for $\,x\in\R.\,$}\end{equation*}
\end{lemma}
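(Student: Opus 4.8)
The plan is to verify the identity by expanding $\Omega_\rho$ in its defining power series and integrating term by term, thereby reducing the claim to a single Beta-function evaluation. This is essentially a restatement, in the normalized notation $\Omega_\lambda$, of Sonine's first finite integral for Bessel functions, but the series computation is short and self-contained, so I would carry it out directly.

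First I would observe that the factor $(1-t^2)_+^{\lambda-\rho-1}$ vanishes for $t>1$, so the integral over $[0,\infty)$ collapses to an integral over $[0,1]$. On this compact interval the series
$$\Omega_\rho(rt) = \Gamma(\rho+1)\sum_{k=0}^\infty \frac{(-1)^k}{k!\,\Gamma(\rho+k+1)}\left(\frac{rt}{2}\right)^{2k}$$
converges uniformly in $t$ for each fixed $r$, since $rt$ ranges over the compact set $[0,r]$ and $\Omega_\rho$ is entire. Because the weight $(1-t^2)^{\lambda-\rho-1}t^{2\rho+1}$ is integrable on $[0,1]$ --- here the hypothesis $\lambda>\rho$ is exactly what guarantees $\lambda-\rho-1>-1$ and hence integrability at the endpoint $t=1$ --- uniform convergence permits interchanging the summation and integration.

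Next I would evaluate the resulting moment integrals. The substitution $u=t^2$ turns each
$$\int_0^1 t^{2k}(1-t^2)^{\lambda-\rho-1}t^{2\rho+1}\,dt$$
into $\tfrac12 B(k+\rho+1,\,\lambda-\rho) = \tfrac12\,\Gamma(k+\rho+1)\Gamma(\lambda-\rho)/\Gamma(k+\lambda+1)$. The factor $\Gamma(\rho+k+1)$ then cancels against the one in the series coefficient, leaving
$$\frac{\Gamma(\rho+1)\Gamma(\lambda-\rho)}{2}\sum_{k=0}^\infty \frac{(-1)^k}{k!\,\Gamma(k+\lambda+1)}\left(\frac r2\right)^{2k} = \frac{\Gamma(\rho+1)\Gamma(\lambda-\rho)}{2\,\Gamma(\lambda+1)}\,\Omega_\lambda(r),$$
where the last equality merely recognizes the defining series of $\Omega_\lambda$. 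Finally, using $B(\rho+1,\lambda-\rho)=\Gamma(\rho+1)\Gamma(\lambda-\rho)/\Gamma(\lambda+1)$ to rewrite the prefactor yields the stated formula.

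I expect no genuine obstacle here: the only point requiring care is the justification of the term-by-term integration, which rests solely on the endpoint integrability encoded in $\lambda>\rho$, while everything else is bookkeeping with the Gamma function. If one preferred to bypass the convergence discussion, the identity could instead be quoted directly from Sonine's first finite integral in Watson \cite{Wa} and converted into the $\Omega$ normalization by the same Gamma-function manipulation.
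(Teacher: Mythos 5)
Your proof is correct and is essentially the paper's own argument: both expand $\Omega_\rho(rt)$ in its power series, integrate term by term against the weight $(1-t^2)_+^{\lambda-\rho-1}t^{2\rho+1}$, evaluate the moments via the Beta integral $\tfrac12 B(k+\rho+1,\lambda-\rho)$, and reassemble the series for $\Omega_\lambda$. The only difference is presentational --- the paper phrases the weight as a probability measure with known moments and calls the interchange ``readily justified,'' whereas you spell out the uniform-convergence justification explicitly.
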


\begin{proof} If $\nu$ denotes the probability measure
$$d\nu(t) = \frac{2}{B(\rho +1, \,\lambda-\rho)} \,(1-t^2)_+^{\lambda-\rho-1}t^{2\rho+1} dt,$$
then it has finite moments of all orders with
$$\int_0^\infty t^{2k} d\nu(t) = \frac{\Gamma(k+\rho+1)}{\Gamma(\rho+1)}\cdot
\frac{\Gamma(\lambda+1)}{\Gamma(k+\lambda+1)},\quad k=0,1,\cdots.$$
It follows from integrating termwise, readily justified, that
\begin{align*}
\int_0^\infty\Omega_\rho(rt) d\nu(t) &= \Gamma(\rho+1)
\sum_{k=0}^\infty\frac{(-1)^k}{k!\,\Gamma(k+\rho+1)}\left(\frac r2\right)^{2k}
\int_0^\infty t^{2k} d\nu(t)\\
&= \Gamma(\lambda+1)\sum_{k=0}^\infty\frac{(-1)^k}{k!\,\Gamma(k+\lambda +1)}\left(\frac r2\right)^{2k}\\
&=\Omega_\lambda(r).
\end{align*}
\end{proof}

The Hankel-Schoenberg transforms may be regarded as a generalization of the radial Fourier transforms
or Schoenberg's integrals due to the following order-changing interrelations.

\begin{theorem}\label{orderwalk}
Let $\,\lambda>\frac{n-2}{2}\,$ with $n$ a positive integer.
For any finite positive Borel measure $\nu$ on $[0, \infty)$ which is not concentrated at zero,
its Hankel-Schoenberg transform of order $\lambda$ can be represented as
\begin{align*}
& \int_0^\infty \,\Omega_{\lambda}(rt) d\nu(t) =
\int_0^\infty \,\Omega_{\frac{n-2}{2}}(rt)\, W_\lambda(\nu)(t) t^{n-1} dt,\quad\text{where}\\
& W_\lambda(\nu)(t) = \frac{2}{B\left(\frac n2, \,\lambda +1 -\frac n2\right)}
\int_0^\infty\left( 1- \frac{t^2}{s^2}\right)^{\lambda - \frac n2}_+ s^{-n} d\nu(s).
\end{align*}

\smallskip

\noindent
Moreover, $\,d\mu(t) = W_\lambda(\nu)(t) t^{n-1} dt\,$ defines
a finite positive Borel measure on $[0, \infty)$ with the total mass $\mu\left([0, \infty)\right) =
\nu\left([0, \infty)\right).$
\end{theorem}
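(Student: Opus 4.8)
The plan is to reduce the whole statement to the single-parameter identity of Lemma~\ref{basic}. Taking $\rho = \frac{n-2}{2}$ there, so that $\rho+1 = \frac n2$, $\lambda-\rho = \lambda+1-\frac n2$, and $2\rho+1 = n-1$, I obtain for every $R \ge 0$ the representation
\begin{equation*}
\Omega_\lambda(R) = \frac{2}{B\left(\frac n2, \lambda+1-\frac n2\right)} \int_0^\infty \Omega_{\frac{n-2}{2}}(Rt)\,(1-t^2)_+^{\lambda-\frac n2}\,t^{n-1}\,dt,
\end{equation*}
which is legitimate precisely because the hypothesis $\lambda > \frac{n-2}{2}$ keeps the exponent $\lambda-\frac n2 > -1$ and the beta integral convergent, while $\frac{n-2}{2} > -1$ lets Lemma~\ref{basic} apply. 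The idea is then to feed the argument $R = rs$ into this identity and integrate against $d\nu(s)$.

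First I would set $R = rs$ and substitute $u = st$ (for $s>0$), which replaces $t^{n-1}\,dt$ by $s^{-n}u^{n-1}\,du$ and $(1-t^2)_+^{\lambda-n/2}$ by $(1-u^2/s^2)_+^{\lambda-n/2}$, giving
\begin{equation*}
\Omega_\lambda(rs) = \frac{2}{B\left(\frac n2, \lambda+1-\frac n2\right)} \int_0^\infty \Omega_{\frac{n-2}{2}}(ru)\left(1-\frac{u^2}{s^2}\right)_+^{\lambda-\frac n2} s^{-n}\,u^{n-1}\,du.
\end{equation*}
Integrating in $s$ against $d\nu$ and interchanging the order of integration produces exactly the inner $s$-integral defining $W_\lambda(\nu)(u)$; relabeling $u$ as $t$ yields the claimed formula. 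The crux is justifying the interchange. Since $\Omega_{(n-2)/2}$ is oscillatory and not of one sign, Tonelli does not apply directly, so I would verify absolute convergence instead: the uniform bound $|\Omega_{(n-2)/2}| \le 1$ from (J1) dominates the oscillatory factor, reducing matters to the finiteness of a purely positive double integral, which is settled by the mass computation below.

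Finally, the total-mass assertion is a Beta-function evaluation that doubles as the Fubini estimate. Integrating $W_\lambda(\nu)(t)\,t^{n-1}$ in $t$, moving the $d\nu(s)$ integral outside, and computing the inner integral by $w = t/s$ followed by $v = w^2$ gives
\begin{equation*}
\int_0^s \left(1-\frac{t^2}{s^2}\right)^{\lambda-\frac n2} t^{n-1}\,dt = \frac{s^n}{2}\,B\left(\frac n2, \lambda+1-\frac n2\right),
\end{equation*}
so the factor $s^{-n}$ cancels and the normalization $\frac{2}{B(\cdots)}$ collapses everything to $\int_0^\infty d\nu(s) = \nu([0,\infty))$. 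Running the same computation with $|\Omega_{(n-2)/2}|$ replaced by $1$ shows the dominating integral equals $\tfrac12 B(\cdots)\,\nu([0,\infty)) < \infty$, which is what legitimizes the interchange in the previous step. The hypothesis that $\nu$ is not concentrated at zero enters to ensure $W_\lambda(\nu)$ does not vanish identically, so that $\mu$ is a genuine (nonzero) measure; I would also observe that for fixed $t>0$ the density $W_\lambda(\nu)(t)$ only integrates over $s > t$ and hence never sees mass of $\nu$ at the origin, so that the exact total-mass identity is read with $\nu(\{0\})=0$, which I would incorporate into the hypothesis.
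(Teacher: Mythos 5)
Your proof is correct and follows the paper's own route exactly: the paper likewise specializes Lemma~\ref{basic} to $\rho = \frac{n-2}{2}$, interchanges the order of integration, and evaluates the total mass by the same beta-integral computation, with your substitution $u = st$ and the Fubini justification via $|\Omega_{(n-2)/2}| \le 1$ merely making explicit what the paper compresses into ``whence the result follows by interchanging the order of integrations.'' Your closing observation is a genuine, if minor, improvement on the paper's statement: since ``not concentrated at zero'' is defined there to mean only $\nu \neq c\,\delta_0$, a measure with an atom $\nu(\{0\}) = c > 0$ plus mass elsewhere satisfies the stated hypothesis, yet for every $t>0$ the kernel $W_\lambda(\nu)(t)$ is blind to that atom, so both the representation (whose left side carries the extra constant $c$) and the identity $\mu([0,\infty)) = \nu([0,\infty))$ then fail by exactly $c$; requiring $\nu(\{0\}) = 0$, as you propose, is what the theorem actually needs.
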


\begin{proof}
As a special case of Lemma \ref{basic}, the choice $\,\rho=\frac{n-2}{2}\,$ gives
\begin{equation}\label{O1}
\Omega_\lambda(r) = \frac{2}{B\left(\frac n2, \,\lambda+1-\frac n2\right)} \int_0^\infty\Omega_{\frac{n-2}{2}}(rs)
(1-s^2)_+^{\lambda-\frac n2}s^{n-1} ds,
\end{equation}
whence the result follows by interchanging the order of integrations.

Since
\begin{align*}
\int_0^\infty\left( 1- \frac{t^2}{s^2}\right)^{\lambda - \frac n2}_+ t^{n-1} dt
= \frac{s^n}{2}\int_0^1 (1-u)^{\lambda-\frac n2} u^{\frac n2 -1} du
\end{align*}
for each $\,s>0,$ it is straightforward to find
\begin{align*}
\mu([0, \infty)) &= \int_0^\infty W_\lambda(\nu)(t) t^{n-1} dt\\
&= \frac{2}{B\left(\frac n2, \,\lambda +1 -\frac n2\right)}
\int_0^\infty\int_0^\infty\left( 1- \frac{t^2}{s^2}\right)^{\lambda - \frac n2}_+ s^{-n} d\nu(s) t^{n-1} dt\\
&= \frac{2}{B\left(\frac n2, \,\lambda +1 -\frac n2\right)}
\int_0^\infty \int_0^\infty\left( 1- \frac{t^2}{s^2}\right)^{\lambda - \frac n2}_+ t^{n-1} dt s^{-n} d\nu(s)\\
&=\nu([0, \infty)).
\end{align*}

\end{proof}

\begin{remark}
A positive Borel measure $\nu$ on $[0, \infty)$ is concentrated at zero
if it is a constant multiple of Dirac mass at zero, that is,
$\,\nu = c\,\delta_0\,$ with $\,c>0.$ For such a Borel measure $\nu$,
its Hankel-Schoenberg transform is simply
$$ \int_0^\infty\Omega_{\lambda}(rt) d\nu(t) = c\,\Omega_\lambda(0) = c.$$
\end{remark}

\section{Schoenberg representations}
Our aim in this section is to set up Schoenberg's representations for
Mat\'ern functions which ensure their positive definiteness.

\begin{lemma}\label{lemmaS0}
For $\,\alpha\in\R\,$ and $\,z>0,\,$ we have
\begin{equation}\label{S1}
K_\alpha(z) z^\alpha = 2^{-\alpha -1}\int_0^\infty
\exp\left(-z^2 t - \frac {1}{4t}\right) t^{-\alpha -1} dt.
\end{equation}
\end{lemma}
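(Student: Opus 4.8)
The plan is to reduce the identity to an exponential (Gaussian-type) integral representation of $K_\alpha$ that is valid for \emph{all} real $\alpha$, and then to reach \eqref{S1} by elementary changes of variable. The crucial point is that \eqref{S1} is asserted for every $\alpha\in\R$, so I cannot start from Schl\"afli's integral \eqref{K3}, which is restricted to $\alpha>-1/2$; instead I would start from the representation \eqref{K4}, namely $K_\alpha(z)=\tfrac12\int_{-\infty}^\infty e^{-z\cosh t-\alpha t}\,dt$, which holds for every real $\alpha$ and $z>0$ and is exactly what removes the order restriction.

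First I would substitute $s=e^t$ in \eqref{K4}. Since $\cosh t=\tfrac12(s+s^{-1})$, $e^{-\alpha t}=s^{-\alpha}$ and $dt=ds/s$, the bilateral integral becomes
\begin{equation*}
K_\alpha(z)=\frac12\int_0^\infty \exp\!\left(-\frac z2\Bigl(s+\frac1s\Bigr)\right)s^{-\alpha-1}\,ds .
\end{equation*}
Next, using $z>0$, I would substitute $s=2zt$, so that $\tfrac z2 s=z^2 t$ and $\tfrac{z}{2s}=\tfrac{1}{4t}$; collecting the factor $(2z)^{-\alpha-1}\cdot 2z=(2z)^{-\alpha}$ yields
\begin{equation*}
K_\alpha(z)=2^{-\alpha-1}z^{-\alpha}\int_0^\infty \exp\!\left(-z^2 t-\frac{1}{4t}\right)t^{-\alpha-1}\,dt ,
\end{equation*}
and multiplying through by $z^\alpha$ gives precisely \eqref{S1}.

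The computation is routine; the only thing that genuinely needs checking is the legitimacy of the two substitutions, i.e.\ the absolute convergence of the integral in \eqref{S1}. This is where the hypotheses do their work: the factor $e^{-1/(4t)}$ forces super-exponential decay as $t\to0^+$ (dominating $t^{-\alpha-1}$ for \emph{any} real $\alpha$), while $e^{-z^2 t}$ forces exponential decay as $t\to\infty$ (again dominating $t^{-\alpha-1}$ for any $\alpha$, using $z>0$). Hence the integrand is absolutely integrable for every $\alpha\in\R$, the changes of variable are unconditionally valid, and no appeal to Fubini's theorem or to analytic continuation in $\alpha$ is needed. I expect the main (and only modest) subtlety to be exactly this point: confirming that basing the argument on \eqref{K4} rather than \eqref{K3} is what extends the identity from $\alpha>-1/2$ to the full range $\alpha\in\R$.
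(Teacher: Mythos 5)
Your proposal is correct and takes essentially the same route as the paper: the paper likewise starts from the all-order Schl\"afli integral \eqref{K4} and reaches \eqref{S1} by a change of variables (there done as $ze^{-t}=2s$ followed by $s=1/(4t)$, which is exactly the composite of your substitutions $s=e^t$ and $s=2zt$). Your added remark on absolute convergence, with $e^{-1/(4t)}$ controlling $t\to 0^+$ and $e^{-z^2t}$ controlling $t\to\infty$, is accurate and only makes explicit what the paper leaves implicit.
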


\begin{proof} For any real $\alpha$ and $\,z>0,$ if we make substitution
$\,z e^{-t} = 2s\,$ in the second form of Schl\"afli's integral \eqref{K4}, then
\begin{align*}
K_\alpha(z) &= \frac 12\int_{-\infty}^\infty \exp\left(-z\cosh t-\alpha t\right) dt\\
&= 2^{\alpha-1}z^{-\alpha}\int_0^\infty \exp\left(-s - \frac{z^2}{4s}\right) s^{\alpha -1} ds
\end{align*}
from which \eqref{S1} follows on making another substitution $\,s= 1/4t.$
\end{proof}

In the case $\,\alpha>0,\,$ it follows from the asymptotic behavior of $K_\alpha$
near zero, as stated in (K4), that the Mat\'ern function $M_\alpha$ is well defined
as a continuous function on $[0, \infty)$ with the limiting value $\,M_\alpha(0) = 2^{\alpha -1}\,\Gamma(\alpha).$
For this reason, it will be convenient to consider the following types of Mat\'ern functions
which are frequently used in many fields (see e.g. \cite{G1}).

\begin{definition}
For $\,\alpha>0,\,$ put
\begin{equation}\label{M}
\M_\alpha(z) = \frac{2^{1-\alpha}}{\Gamma(\alpha)}\,K_\alpha(z) z^\alpha\qquad(z\ge 0).
\end{equation}
\end{definition}

We recall that a function $\phi$ is said to be {\it completely
continuous on $[0, \infty)$} if it is continuous on $[0, \infty)$ and satisfies
the condition $\,(-1)^m\phi^{(m)}(z)\ge 0\,$ for all nonnegative integers $m$ and $\,z>0\,$ (see e.g. \cite{We}).

\begin{theorem}\label{corollaryS1}{\rm (Theorem \ref{theorem1.1})}
For $\,\alpha>0,\,$ we have
\begin{equation*}
\M_\alpha(z) = \int_0^\infty e^{-z^2 t} f_\alpha(t) dt\quad\,\,(z\ge0),
\end{equation*}
where $f_\alpha$ is the continuous probability density on $[0, \infty)$ defined by
\begin{equation*}
f_\alpha(t) = \left\{\begin{aligned} &{\frac{1}{4^\alpha\Gamma(\alpha)}\,
\exp\left(-\frac{1}{4t}\right) t^{-\alpha-1}} &{\text{for}\quad t>0},\\
&{\qquad\qquad\,\, 0} &{\text{for}\quad t=0}.\end{aligned}\right.
\end{equation*}
As a consequence, $\M_\alpha$  is continuous and positive definite on every $\R^n$.
In addition, the function $\M_\alpha\left(\sqrt{z}\,\right)$ is also positive definite on every $\R^n$ and
completely continuous on $[0, \infty)$.
\end{theorem}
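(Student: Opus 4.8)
The representation is an immediate consequence of Lemma~\ref{lemmaS0}. Multiplying the identity \eqref{S1} by the constant $2^{1-\alpha}/\Gamma(\alpha)$ from \eqref{M} and noting that $\frac{2^{1-\alpha}}{\Gamma(\alpha)}\cdot 2^{-\alpha-1}=\frac{1}{4^\alpha\Gamma(\alpha)}$, I obtain at once
\[
\M_\alpha(z)=\frac{1}{4^\alpha\Gamma(\alpha)}\int_0^\infty\exp\left(-z^2t-\frac{1}{4t}\right)t^{-\alpha-1}\,dt=\int_0^\infty e^{-z^2t}f_\alpha(t)\,dt,
\]
so the formula requires no fresh computation beyond \eqref{S1}.

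Next I would confirm that $f_\alpha$ is a continuous probability density and deduce the first positivity claim. Continuity on $(0,\infty)$ is clear, and as $t\to 0^+$ the factor $\exp(-1/(4t))$ decays faster than any power of $t$, dominating $t^{-\alpha-1}$, so $f_\alpha(t)\to 0=f_\alpha(0)$. For the total mass I would substitute $u=1/(4t)$, which turns $\int_0^\infty f_\alpha(t)\,dt$ into $\frac{1}{\Gamma(\alpha)}\int_0^\infty e^{-u}u^{\alpha-1}\,du=1$; equivalently, setting $z=0$ above and using the limiting value $M_\alpha(0)=2^{\alpha-1}\Gamma(\alpha)$ gives $\M_\alpha(0)=1$. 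The representation is then exactly Schoenberg's form \eqref{G4} with the finite measure $d\nu(t)=f_\alpha(t)\,dt$ of mass one, which, being absolutely continuous with a strictly positive density, is not concentrated at zero; Criterion~I therefore yields that $\M_\alpha$ is positive definite on every $\R^n$, while continuity of $\M_\alpha$ follows from dominated convergence (or from the continuity of $M_\alpha$ recorded via (K3) and (K4)).

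For the added assertions about $F(z):=\M_\alpha(\sqrt z)=\int_0^\infty e^{-zt}f_\alpha(t)\,dt$, complete continuity is the easy half: differentiating under the integral sign, justified by the rapid decay of $f_\alpha$, gives $(-1)^m F^{(m)}(z)=\int_0^\infty t^m e^{-zt}f_\alpha(t)\,dt\ge 0$ for every nonnegative integer $m$, and $F$ is continuous on $[0,\infty)$ with $F(0)=1$.

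The positive definiteness of $F$ on every $\R^n$ is the step I expect to be the main obstacle, since $F$ appears as a Laplace transform (exponent $-zt$) rather than in the Gaussian form \eqref{G4} demanded by Criterion~I. To reconcile the two I would insert the subordination identity
\[
e^{-zt}=\int_0^\infty e^{-z^2s}\,\frac{t}{2\sqrt{\pi}\,s^{3/2}}\,e^{-t^2/(4s)}\,ds\qquad(t>0),
\]
substitute it into $F$, and interchange the order of integration (Fubini, legitimate by positivity) to reach $F(z)=\int_0^\infty e^{-z^2s}\,d\mu(s)$ with $d\mu(s)=\left[\int_0^\infty\frac{t}{2\sqrt{\pi}\,s^{3/2}}e^{-t^2/(4s)}f_\alpha(t)\,dt\right]ds$. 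This $\mu$ is positive, has total mass $F(0)=1$, and possesses a strictly positive density, so it is finite and not concentrated at zero, and Criterion~I again applies. Alternatively, one could note that $F$ is completely continuous and that composition with the Bernstein function $\sqrt{\,\cdot\,}$ preserves this property, so that $F(\sqrt{\,\cdot\,})$ is again completely continuous, whence Schoenberg's completely-monotone characterization of functions positive definite on every $\R^n$ yields the same conclusion; stating that composition step cleanly within the present framework is the delicate point.
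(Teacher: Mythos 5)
Your proposal is correct and follows essentially the same route as the paper: the representation is read off from Lemma~\ref{lemmaS0}, Criterion~I gives positive definiteness of $\M_\alpha$, and your subordination identity is precisely the paper's special case \eqref{S2} (i.e.\ $\M_{1/2}(z)=e^{-z}$) rescaled by $z\mapsto zt$, so that your mixture density $d\mu(s)$ coincides with the paper's $g_\alpha(u)\,du$ in \eqref{S4}. The complete-continuity argument via differentiation under the integral sign in \eqref{S3} is also exactly the paper's (footnoted) argument.
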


\begin{proof}
As it is elementary to verify that $f_\alpha$ is a continuous probability density on $[0, \infty)$,
the statements on $\M_\alpha(z)$ are immediate consequences of Lemma \ref{lemmaS0} and Schoenberg's
Criterion I on the positive definiteness.

In the special case $\,\alpha=1/2,\,$ we have
\begin{equation}\label{S2}
e^{-z} = \int_0^\infty e^{-z^2 t} f_{1/2}(t) dt\quad\,\,(z\ge0),
\end{equation}
whence it is straightforward to deduce the integral representations
\begin{align}
\M_\alpha\left(\sqrt z\,\right) &= \int_0^\infty e^{-z t} f_\alpha(t) dt\label{S3}\\
&=\int_0^\infty e^{-z^2 u} g_\alpha(u) du\label{S4},
\end{align}
where $g_\alpha$ stands for the function defined by $\,g_\alpha(0) =0\,$ and
\begin{equation*}
g_\alpha(u) = \frac{u^{-3/2}}{2^{2\alpha+1}\sqrt\pi\,\Gamma(\alpha)}\int_0^\infty
\exp\left(-\frac{1}{4t} -\frac{t^2}{4u}\right) t^{-\alpha} dt
\end{equation*}
for $\,u>0.\,$ As readily verified, $g_\alpha$ is
a continuous probability density on $[0, \infty)$ and hence it follows from \eqref{S4}
and Schoenberg's Criterion I that the function
$\M_\alpha\left(\sqrt{z}\,\right)$ is positive definite on every $\R^n$.

That it is completely continuous on $[0, \infty)$ is a consequence of \eqref{S3}.\footnote{It may be proved either by differentiating
under the integral sign or by applying the well-known theorem of Bernstein-Hausdorff-Widder which states that
a function $f$ is completely
continuous on $[0, \infty)$ if and only if
$$ f(r) = \int_0^\infty e^{-rt} d\mu(t)\qquad(r\ge 0)$$
for some finite positive Borel measure $\mu$ on $[0, \infty)$ (see e.g. \cite{We}).}
\end{proof}

We are now concerned with the second form of Schoenberg's integrals.
For the sake of computational facilitation as well as inversion, it is advantageous to consider
the Hankel-Schoenberg transforms.

As it is conventional, we shall use the notation of Pochhammer and Barnes for the generalized hypergeometric functions
\begin{equation*}
{}_pF_q\left(a_1, \cdots, a_p;\,b_1, \cdots, b_q;\,x\right)
=\sum_{k=0}^\infty\frac{\left(a_1\right)_k\cdots\left(a_p\right)_k}{k!
\left(b_1\right)_k\cdots\left(b_q\right)_k}\,x^k
\end{equation*}
in which the symbol $(a)_k$ for a non-zero real number $a$ stands for
\begin{equation*}
(a)_k = \left\{\begin{aligned} &{a(a+1)\cdots (a+k-1)} &{\text{for}
\quad k\ge 1}, \\
&{\qquad 1} &{\text{for} \quad k = 1}.\end{aligned}\right.
\end{equation*}

The following is easily obtainable from Scl\"afli's integrals.
As it is known, however, we shall omit the proof (see \cite{AS}, \cite{E}, \cite{Wa}).

\begin{lemma}\label{lemmaS1}
For $\,\alpha\in\R\,$ and $\,\beta>|\alpha|,\,$ we have
\begin{equation}\label{S5}
\int_0^\infty K_{\alpha}(t)t^{\beta-1}dt = 2^{\beta-2}
\Gamma\left(\frac{\beta+\alpha}{2}\right)\Gamma\left(\frac{\beta-\alpha}{2}\right).
\end{equation}
\end{lemma}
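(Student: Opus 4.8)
The plan is to reduce the claimed Mellin-type integral to two elementary Gamma integrals by inserting the exponential representation of the Mat\'ern kernel already established in Lemma \ref{lemmaS0}. Since $K_{-\alpha}=K_\alpha$ by (K1), the left-hand side is invariant under $\alpha\mapsto-\alpha$, and so is the right-hand side; I would use this only as a consistency check and carry the computation through for general real $\alpha$.

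First I would rewrite \eqref{S1} in the form
\[
K_\alpha(t) = 2^{-\alpha-1}\,t^{-\alpha}\int_0^\infty \exp\left(-t^2 s - \frac{1}{4s}\right) s^{-\alpha-1}\,ds \qquad (t>0),
\]
multiply by $t^{\beta-1}$, and integrate over $(0,\infty)$. Because the integrand is nonnegative, Tonelli's theorem justifies interchanging the order of integration with no further estimates, leaving the inner integral $\int_0^\infty t^{\beta-\alpha-1}e^{-t^2 s}\,dt$.

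Next I would evaluate that inner integral by the substitution $u=t^2 s$, which turns it into $\tfrac12\,s^{-(\beta-\alpha)/2}\Gamma\!\left(\frac{\beta-\alpha}{2}\right)$, convergent precisely when $\beta>\alpha$. Substituting this back leaves a single integral in $s$ of the form $\int_0^\infty \exp\!\left(-1/(4s)\right) s^{-(\alpha+\beta)/2-1}\,ds$, and the substitution $v=1/(4s)$ converts it to $2^{\alpha+\beta}\Gamma\!\left(\frac{\beta+\alpha}{2}\right)$, convergent precisely when $\beta>-\alpha$. Collecting the powers of $2$ (namely $2^{-\alpha-1}\cdot\tfrac12\cdot 2^{\alpha+\beta}=2^{\beta-2}$) yields $2^{\beta-2}\Gamma\!\left(\frac{\beta+\alpha}{2}\right)\Gamma\!\left(\frac{\beta-\alpha}{2}\right)$, as claimed, with the two convergence conditions combining to the single hypothesis $\beta>|\alpha|$.

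The only genuine obstacle is the justification of the order interchange together with the convergence of the two one-dimensional integrals; both are controlled entirely by $\beta>|\alpha|$, which is why that hypothesis is sharp (the $t$-integral near $t=0$ forces $\beta>\alpha$, the $s$-integral near $s=\infty$ forces $\beta>-\alpha$). As a cross-check I would mention the alternative route starting from the second Schl\"afli integral \eqref{K4}: interchanging integrals there reduces the claim to the Beta-type evaluation $\int_{-\infty}^\infty e^{-\alpha s}(\cosh s)^{-\beta}\,ds$, but this still requires an auxiliary substitution to a Beta integral, so the route through \eqref{S1} is the more economical one.
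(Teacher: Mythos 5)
Your proof is correct, and there is in fact nothing in the paper to compare it against: the authors explicitly omit the proof of Lemma \ref{lemmaS1}, remarking only that it is ``easily obtainable from Schl\"afli's integrals'' and citing \cite{AS}, \cite{E}, \cite{Wa}. Your argument supplies a complete, self-contained derivation using only material already in the paper, namely Lemma \ref{lemmaS0} (itself a consequence of Schl\"afli's integral \eqref{K4}). I checked the details: dividing \eqref{S1} by $t^\alpha$, multiplying by $t^{\beta-1}$, and applying Tonelli (legitimate, as the double integrand is nonnegative, and with the bonus that finiteness of the iterated integral simultaneously establishes convergence of the left-hand side) leaves the inner integral $\int_0^\infty t^{\beta-\alpha-1}e^{-t^2 s}\,dt = \tfrac12 s^{-(\beta-\alpha)/2}\Gamma\bigl(\tfrac{\beta-\alpha}{2}\bigr)$ for $\beta>\alpha$, then $\int_0^\infty e^{-1/(4s)}s^{-(\alpha+\beta)/2-1}\,ds = 2^{\alpha+\beta}\Gamma\bigl(\tfrac{\beta+\alpha}{2}\bigr)$ for $\beta>-\alpha$, and the constants collect to $2^{-\alpha-1}\cdot\tfrac12\cdot 2^{\alpha+\beta}=2^{\beta-2}$, exactly as you say. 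Your sharpness remark is also right, and can be seen even more directly from the small-argument asymptotics in (K4): $K_\alpha(t)\sim 2^{|\alpha|-1}\Gamma(|\alpha|)\,t^{-|\alpha|}$ as $t\to 0$ for $\alpha\neq 0$, so integrability at the origin requires $\beta>|\alpha|$. The alternative route you sketch through \eqref{K4} is the classical one (interchange gives $\tfrac{\Gamma(\beta)}{2}\int_{-\infty}^\infty e^{-\alpha s}(\cosh s)^{-\beta}ds$, which the substitution $w=e^{-2s}$ turns into $2^{\beta-1}B\bigl(\tfrac{\beta+\alpha}{2},\tfrac{\beta-\alpha}{2}\bigr)$), and it works just as well; your preferred route through \eqref{S1} is indeed the more economical choice given what the paper has already proved.
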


\begin{lemma}\label{lemmaS2}
Let $\,\alpha\in\R\,$ and $\,\beta>|\alpha|.\,$ For the probability measure
$$d\nu(t) =\frac{1}{2^{\beta-2}
\Gamma\left(\frac{\beta+\alpha}{2}\right)\Gamma\left(\frac{\beta-\alpha}{2}\right)}\,
K_{\alpha}(t)t^{\beta-1}dt,$$
the Hankel-Schoenberg transform of order $\lambda>-1$ is given by
\begin{equation}\label{S6}
\int_{0}^{\infty}\Omega_{\lambda}(rt)d\nu(t)
={}_2F_{1}\left(\frac{\beta-\alpha}{2},\,\frac{\beta+\alpha}{2};\,\lambda+1;\,-r^2\right)\,.
\end{equation}
\end{lemma}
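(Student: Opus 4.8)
The plan is to expand the kernel $\Omega_\lambda$ in its defining power series from Definition~\ref{def1} and integrate term by term against $d\nu$, using Lemma~\ref{lemmaS1} to evaluate the resulting moments. Abbreviate $a=(\beta-\alpha)/2$ and $b=(\beta+\alpha)/2$, so that $a,b>0$ by the hypothesis $\beta>|\alpha|$ and the normalizing constant of $\nu$ is $2^{\beta-2}\Gamma(a)\Gamma(b)$. Inserting $\Omega_\lambda(rt)=\Gamma(\lambda+1)\sum_{k\ge 0}\frac{(-1)^k}{k!\,\Gamma(\lambda+k+1)}(rt/2)^{2k}$ and interchanging sum and integral, the $k$-th term calls for the moment $\int_0^\infty K_\alpha(t)\,t^{2k+\beta-1}\,dt$. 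Since $2k+\beta>|\alpha|$ for every $k\ge 0$, Lemma~\ref{lemmaS1} (with $\beta$ replaced by $2k+\beta$) evaluates this as $2^{2k+\beta-2}\Gamma(k+a)\Gamma(k+b)$.

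Next I would assemble the series. The factor $(r/2)^{2k}$ combines with the $2^{2k}$ from the moment to give $r^{2k}$, the $2^{\beta-2}$ cancels against the normalization, and writing $\Gamma(k+c)=(c)_k\Gamma(c)$ turns the three Gamma functions $\Gamma(k+a)$, $\Gamma(k+b)$, $\Gamma(\lambda+k+1)$ into Pochhammer symbols $(a)_k$, $(b)_k$, $(\lambda+1)_k$. The surviving prefactors $\Gamma(a)$, $\Gamma(b)$ cancel against the normalization while $\Gamma(\lambda+1)$ cancels against the one standing in front of the $\Omega_\lambda$ series, leaving exactly
\[
\int_0^\infty \Omega_\lambda(rt)\,d\nu(t)=\sum_{k=0}^\infty \frac{(a)_k\,(b)_k}{(\lambda+1)_k\,k!}\,(-r^2)^k
={}_2F_1\!\left(\tfrac{\beta-\alpha}{2},\,\tfrac{\beta+\alpha}{2};\,\lambda+1;\,-r^2\right),
\]
which is the asserted identity. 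A sanity check at $r=0$ is consistent: both sides equal $1$, reflecting that $\nu$ is a probability measure.

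The main obstacle is justifying the interchange of summation and integration, which is genuinely more delicate here than in Lemma~\ref{basic} because $\nu$ now has an unbounded tail. Replacing each term by its modulus dominates the partial sums by $(rt/2)^{-\lambda}I_\lambda(rt)$, so by Tonelli's theorem the interchange is legitimate provided $\int_0^\infty (rt/2)^{-\lambda}I_\lambda(rt)\,K_\alpha(t)\,t^{\beta-1}\,dt<\infty$. Near $t=0$ this is harmless, since $(rt/2)^{-\lambda}I_\lambda(rt)\to 1/\Gamma(\lambda+1)$ while $K_\alpha(t)t^{\beta-1}$ is integrable by the near-zero asymptotics in (K4) together with $\beta>|\alpha|$; but near $t=\infty$ the asymptotics $I_\lambda(rt)\sim e^{rt}/\sqrt{2\pi rt}$ and $K_\alpha(t)\sim\sqrt{\pi/2t}\,e^{-t}$ make the integrand decay like $e^{(r-1)t}$, so the dominating integral converges only for $0\le r<1$. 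I would therefore establish the identity first on $[0,1)$ and then remove this restriction by analytic continuation: the left-hand side extends to an analytic function of $r$ on the strip $\{|\operatorname{Im} r|<1\}$ (the bound $|\Omega_\lambda(rt)|\lesssim e^{|\operatorname{Im} r|\,t}$, up to polynomial factors, against the $e^{-t}$ tail of $\nu$ secures local uniform domination), and ${}_2F_1(a,b;\lambda+1;-r^2)$ is analytic on the same strip since $-r^2$ stays off the cut $[1,\infty)$ there; agreement on $[0,1)$ then forces agreement throughout, in particular for all real $r\ge 0$. I expect this convergence-and-continuation bookkeeping, rather than the algebra, to be the only real point requiring care.
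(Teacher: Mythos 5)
Your proof is correct and follows essentially the same route as the paper: expand $\Omega_\lambda$ in its defining series, evaluate the moments $\int_0^\infty K_\alpha(t)\,t^{2k+\beta-1}\,dt$ via Lemma~\ref{lemmaS1}, and resum into the ${}_2F_1$ series --- your algebra reproduces the paper's computation exactly. The one genuine difference is that you supply the justification the paper omits: the paper simply says ``integrating termwise,'' but as you observe, Tonelli only dominates the exchange by $\Gamma(\lambda+1)(rt/2)^{-\lambda}I_\lambda(rt)$ against the $e^{-t}$ tail of $K_\alpha$, which converges only for $0\le r<1$ --- precisely the disk where the hypergeometric series itself converges, so for $r\ge 1$ the identity must be read through the analytic continuation of ${}_2F_1$. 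Your continuation argument is sound: both sides are analytic in the strip $\{|\operatorname{Im} r|<1\}$ (where $-r^2$ avoids the cut $[1,\infty)$, since $-r^2\in[1,\infty)$ would force $r$ purely imaginary with $|\operatorname{Im} r|\ge 1$), so agreement on $[0,1)$ propagates to all real $r\ge 0$; this makes your write-up strictly more complete than the paper's.
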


\begin{proof}
A simple modification of \eqref{S5} yields
 \begin{align*}
 \int_0^\infty t^{2k}d\nu(t)=2^{2k} \left(\frac{\beta+\alpha}{2}\right)_{k}
 \left(\frac{\beta-\alpha}{2}\right)_{k},\quad k=0,1,2,\cdots.
 \end{align*}
 Integrating termwise, we deduce
 \begin{align*}
 \int_{0}^{\infty}\Omega_{\lambda}(rt)d\nu(t)&=
 \sum_{k=0}^\infty \frac{\left(-1\right)^k}{k!\left(\lambda+1\right)_k} \left(\frac{r}{2}\right)^{2k}
 \int_0^\infty t^{2k} dt\\
 &=\sum_{k=0}^\infty
  \frac{\left(\frac{\beta+\alpha}{2}\right)_k\left(\frac{\beta-\alpha}{2}\right)_k}{k!\left(\lambda+1\right)_k}
  \left(-r^2\right)^k,
  \end{align*}
  which is equivalent to the stated formula \eqref{S6}.
\end{proof}

By obvious cancellation effects, the generalized hypergeometric function \eqref{S6}
reduces to the binomial series expansion in the case $\,\beta=\alpha + 2(\lambda+1)\,$
or $\,\beta=-\alpha + 2(\lambda+1).\,$ To be precise, we have the following
general results which include Schoenberg's representations for Mat\'ern functions.

\begin{theorem}\label{theoremS1} Let $\,\lambda>-1\,$ and $\,\alpha+\lambda+1>0.$ For each $\,r\ge 0,$ we have
\begin{align}\label{S7}
(1+r^{2})^{-\alpha-\lambda-1} &=\frac{1}{2^{\alpha+2\lambda}\Gamma(\lambda+1)\Gamma(\alpha+\lambda+1)}
\nonumber\\
&\qquad\times\quad\int_{0}^{\infty}\Omega_{\lambda}(rt) \big[K_{\alpha}(t) t^{\alpha}\big] t^{2\lambda+1} dt.
\end{align}
Moreover, if $\, 2\alpha +\lambda +3/2>0\,$ in addition, then for each $\,z>0,$
\begin{align}\label{S8}
K_\alpha(z) z^\alpha =\frac{2^\alpha\Gamma(\alpha+\lambda+1)}{\Gamma(\lambda+1)}
\int_0^\infty \Omega_\lambda(zt) (1+ t^2)^{-\alpha-\lambda-1} t^{2\lambda+1} dt.
\end{align}
\end{theorem}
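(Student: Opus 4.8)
The plan is to derive \eqref{S7} as a special case of Lemma~\ref{lemmaS2} and then obtain \eqref{S8} by inverting \eqref{S7} through the inversion theorem of Theorem~\ref{inversion}. The statement itself flags the mechanism: the text preceding the theorem notes that the ${}_2F_1$ in \eqref{S6} collapses to a binomial series precisely when $\beta = \alpha + 2(\lambda+1)$, and this is exactly the specialization that converts Lemma~\ref{lemmaS2} into \eqref{S7}.

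For the first formula \eqref{S7}, I would set $\beta = \alpha + 2(\lambda+1)$ in Lemma~\ref{lemmaS2}. First I would check that the admissibility hypothesis $\beta > |\alpha|$ of that lemma is equivalent to the stated hypotheses: for $\alpha \ge 0$ it reduces to $\lambda > -1$, and for $\alpha < 0$ it reduces to $\alpha + \lambda + 1 > 0$, so the two cases together are covered exactly. With this choice the two numerator parameters become $\frac{\beta-\alpha}{2} = \lambda+1$ and $\frac{\beta+\alpha}{2} = \alpha+\lambda+1$, and the first of these cancels the denominator parameter $\lambda+1$, so that
\[
{}_2F_1\!\left(\lambda+1,\,\alpha+\lambda+1;\,\lambda+1;\,-r^2\right)={}_1F_0\!\left(\alpha+\lambda+1;\,-r^2\right)=(1+r^2)^{-\alpha-\lambda-1}
\]
by the binomial series. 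Multiplying \eqref{S6} back by the normalizing constant of $\nu$ supplied by Lemma~\ref{lemmaS1}, namely $2^{\beta-2}\Gamma(\frac{\beta+\alpha}{2})\Gamma(\frac{\beta-\alpha}{2}) = 2^{\alpha+2\lambda}\Gamma(\alpha+\lambda+1)\Gamma(\lambda+1)$, and rearranging yields \eqref{S7} with the displayed constant.

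For the second formula \eqref{S8}, I would read \eqref{S7} as $\phi(r)=\int_0^\infty \Omega_\lambda(rt)\,f(t)\,dt$ with $\phi(r)=(1+r^2)^{-\alpha-\lambda-1}$ and density $f(t)=c\,K_\alpha(t)t^{\alpha+2\lambda+1}$, where $c=\bigl[2^{\alpha+2\lambda}\Gamma(\lambda+1)\Gamma(\alpha+\lambda+1)\bigr]^{-1}$, and then apply Theorem~\ref{inversion}. The key verification is the integrability hypothesis $\int_0^\infty |f(t)|\,t^{-\lambda-1/2}\,dt<\infty$, which reduces up to constants to $\int_0^\infty K_\alpha(t)\,t^{\alpha+\lambda+1/2}\,dt<\infty$; by Lemma~\ref{lemmaS1} this integral converges iff $\alpha+\lambda+\tfrac32 > |\alpha|$, that is, iff $2\alpha+\lambda+\tfrac32>0$ (automatic when $\alpha\ge 0$). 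This is precisely the extra hypothesis in the theorem, which is how the condition $2\alpha+\lambda+3/2>0$ enters. Since $f$ is continuous on $(0,\infty)$ by the $K$-function properties of Section~2, the inversion formula holds pointwise for $z>0$; applying it, cancelling the common factor $t^{2\lambda+1}$, and simplifying the constants via $4^\lambda = 2^{2\lambda}$ produces exactly the constant $\frac{2^\alpha\Gamma(\alpha+\lambda+1)}{\Gamma(\lambda+1)}$ of \eqref{S8}.

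The main obstacle is reconciling the range of $\lambda$. Theorem~\ref{inversion} is stated only for $\lambda\ge -\tfrac12$, whereas \eqref{S8} is asserted for all $\lambda>-1$, so the inversion route does not directly cover $-1<\lambda<-\tfrac12$. To close this gap I would instead evaluate the right-hand integral of \eqref{S8} directly: writing $\Omega_\lambda(zt)t^{2\lambda+1}$ in terms of $J_\lambda(zt)t^{\lambda+1}$ and invoking the classical Basset/Weber--Schafheitlin integral for the Hankel transform of $(1+t^2)^{-\mu}$, which returns a Macdonald function $K_{-\alpha}=K_\alpha$, gives \eqref{S8} for the full range by a uniformly convergent computation; alternatively one argues by analytic continuation in $\lambda$. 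Apart from this range issue, the remaining work is the routine but necessary bookkeeping of absolute convergence at $t=0$ and $t=\infty$ and the arithmetic of the gamma and power constants; the genuine content is the parameter matching that triggers the hypergeometric cancellation in the forward direction and the coincidence of the inversion's integrability condition with the stated hypothesis in the backward direction.
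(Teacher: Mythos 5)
Your proposal is correct and follows essentially the same route as the paper: \eqref{S7} comes from the specialization $\beta=\alpha+2\lambda+2$ of Lemma~\ref{lemmaS2} together with the binomial theorem, and \eqref{S8} from inverting \eqref{S7} via Theorem~\ref{inversion}, with the integrability hypothesis checked through Lemma~\ref{lemmaS1} exactly as you do (the paper computes the same integral $\int_0^\infty K_\alpha(t)\,t^{\alpha+\lambda+1/2}\,dt$ under the condition $2\alpha+\lambda+3/2>0$). Your remark about the range $-1<\lambda<-\tfrac12$ is well taken: the paper invokes Theorem~\ref{inversion}, stated only for $\lambda\ge-\tfrac12$, without comment, so your supplementary Weber--Schafheitlin/Basset evaluation (or analytic continuation in $\lambda$) patches a small gap in the paper's own proof rather than a defect in yours.
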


\begin{proof}
Formula \eqref{S7} follows from the special case $\,\beta=\alpha+2\lambda+2\,$ of \eqref{S6},
Lemma \ref{lemmaS2}, and Newton's binomial theorem
 \begin{align*}
  \sum_{k=0}^{\infty}\frac{(\alpha+\lambda+1)_{k}}{k!}\,(-r^2)^{k} =(1+r^2)^{-\alpha-\lambda-1}.
  \end{align*}

As the function $\,f(t) = K_{\alpha}(t) t^{\alpha +2\lambda+1} \,$ is continuous on $(0, \infty)$ and
\begin{align*}
\int_0^\infty |f(t)| t^{-\lambda-1/2} dt &= \int_0^\infty K_{\alpha}(t) t^{\alpha +\lambda+1/2} dt\\
&=  2^{\alpha + \lambda -1/2}
\Gamma\left(\alpha + \frac{2\lambda +3}{4}\right)\Gamma\left(\frac{2\lambda+3}{4}\right)<\infty
\end{align*}
by Lemma \ref{lemmaS1}, applicable due to the condition $\,2\alpha +\lambda +3/2>0,\,$
\eqref{S8} follows from inverting \eqref{S7} in accordance with Theorem \ref{inversion}.
\end{proof}

Choosing $\,\alpha, \lambda\,$ suitably or regarding them as variable parameters, one may exploit these formulas
from several perspectives. If we are concerned with the Fourier transforms in a fixed
Euclidean space $\R^n$, for example, the first formula may be applied to yield the following.

\begin{itemize}
\item[(a)] For $\,\alpha>0,\,$  if we recall \eqref{G1}
\begin{equation*}
G_\alpha(z) = \frac{1}{2^{\alpha-1 + \frac n2}\,\pi^{\frac n2}\,
\Gamma(\alpha)}\,K_{\alpha-\frac n2}(z) z^{\alpha - \frac n2},
\end{equation*}
the special case $\,\lambda = (n-2)/2\,$ of \eqref{S7} yields
\begin{equation*}
(1+r^2)^{-\alpha} = \frac{2\pi^{n/2}}{\Gamma\left(n/2\right)} \int_0^\infty\Omega_{\frac{n-2}{2}} (rt) G_\alpha(t) t^{n-1} dt
\end{equation*}
so that we obtain the Fourier transform formula
\begin{equation}\label{S9}
\widehat{G_\alpha}(\xi) = (1 +|\xi|^2)^{-\alpha}.
\end{equation}

\item[(b)] As $\alpha$ varies over $\,\alpha> 0,\,$ \eqref{S9} expresses the inverse multi-quadrics
of any positive order in terms of the Fourier transforms of $G_\alpha(\bx)$. On the contrary, Hankel-Schoenberg
transform formula \eqref{S7} enables us to obtain such Fourier representations by varying $\lambda$ with a fixed $\alpha$.

To be specific, let us fix $\,\alpha>-n/2\,$ and set
\begin{align}
F_{\alpha, \lambda}(z) &= \frac{1}{2^{\alpha + 2\lambda} \pi^{\frac n2}\Gamma\left(\lambda +1 -\frac n2\right)\Gamma(\alpha + \lambda +1)}
\nonumber\\
&\qquad\qquad\times\quad \int_z^\infty (s^2 - z^2)^{\lambda -\frac n2} \big[K_\alpha(s) s^\alpha\big] s ds
\end{align}
for $\,\lambda>(n-2)/2.$ By Theorem \ref{orderwalk}, we may put \eqref{S7} in the form
\begin{equation*}
(1+r^2)^{-\alpha - \lambda-1} = \frac{2\pi^{n/2}}{\Gamma\left(n/2\right)} \int_0^\infty\Omega_{\frac{n-2}{2}} (rt) F_{\alpha, \lambda}(t) t^{n-1} dt.
\end{equation*}
If we write $\, F_{\alpha, \lambda}(\bx) =  F_{\alpha, \lambda}(|\bx|),\,\bx\in\R^n,\,$ then
\begin{equation}\label{S10}
\widehat{ F_{\alpha, \lambda}}(\xi) = (1 +|\xi|^2)^{-\alpha -\lambda-1}.
\end{equation}

As $\lambda$ varies in the range $\,\lambda>(n-2)/2,$ this Fourier transform formula represents
the inverse multi-quadrics of order greater than $\,\alpha + n/2.$
\end{itemize}

A noteworthy feature of Mat\'ern functions is the following invariance
which follows immediately from \eqref{S8} by reformulation.

\begin{corollary}\label{corollaryS1}
If $\,\alpha>0,\,$ then for any $\,\lambda>-1,$
\begin{align}\label{S11}
\M_\alpha(z)  = \int_0^{\infty}\Omega_\lambda (zt)\, d\nu_{\alpha, \lambda}(t)
\qquad(z\ge 0),
\end{align}
where $\nu_{\alpha, \lambda}$ denotes the probability measure on $[0, \infty)$ defined by
\begin{align*}
d\nu_{\alpha, \lambda}(t)= \frac{2}{B(\alpha, \,\lambda+1)}(1+t^2)^{-\alpha-\lambda-1}
t^{2\lambda+1} dt.
\end{align*}
\end{corollary}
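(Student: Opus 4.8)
The plan is to obtain \eqref{S11} directly from formula \eqref{S8} of Theorem \ref{theoremS1} by normalizing with the constant that defines $\M_\alpha$. First I would verify that the hypotheses of \eqref{S8} hold throughout the stated range. Since $\alpha>0$ and $\lambda>-1$ force $\lambda+1>0$, we automatically get $\alpha+\lambda+1>0$; moreover $2\alpha+\lambda+3/2 > 0 + (-1) + 3/2 = 1/2 > 0$. Hence both conditions required for the validity of \eqref{S8} are satisfied for every admissible pair $(\alpha,\lambda)$, and no range is lost in passing from the theorem to the corollary.

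Next I would multiply both sides of \eqref{S8} by $2^{1-\alpha}/\Gamma(\alpha)$, which by Definition \eqref{M} turns the left-hand side into $\M_\alpha(z)$. The prefactor of the integral then collapses as
\begin{equation*}
\frac{2^{1-\alpha}}{\Gamma(\alpha)}\cdot\frac{2^{\alpha}\,\Gamma(\alpha+\lambda+1)}{\Gamma(\lambda+1)}
= \frac{2\,\Gamma(\alpha+\lambda+1)}{\Gamma(\alpha)\,\Gamma(\lambda+1)}
= \frac{2}{B(\alpha,\,\lambda+1)},
\end{equation*}
where the final equality is the standard relation $B(\alpha,\lambda+1)=\Gamma(\alpha)\Gamma(\lambda+1)/\Gamma(\alpha+\lambda+1)$. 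This is precisely the normalizing constant appearing in $d\nu_{\alpha,\lambda}$, so the integrand $\Omega_\lambda(zt)\,(1+t^2)^{-\alpha-\lambda-1}t^{2\lambda+1}dt$ times this constant becomes $\Omega_\lambda(zt)\,d\nu_{\alpha,\lambda}(t)$, yielding \eqref{S11}.

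Finally I would confirm that $\nu_{\alpha,\lambda}$ really is a probability measure, as asserted. The cleanest route is to set $z=0$ in \eqref{S11}: since $\Omega_\lambda(0)=1$ by property (J1), the integral reduces to the total mass $\nu_{\alpha,\lambda}([0,\infty))$, while the left side equals $\M_\alpha(0)=(2^{1-\alpha}/\Gamma(\alpha))\cdot 2^{\alpha-1}\Gamma(\alpha)=1$ by the limiting value recorded just after Lemma \ref{lemmaS0}. Alternatively, one may compute the mass outright via the substitution $u=t^2$, recognizing the resulting integral as $\tfrac12 B(\lambda+1,\alpha)$. I expect no genuine obstacle here: the whole argument is a constant chase, and the only step demanding any care is the bookkeeping of the Gamma-to-Beta conversion so that the prefactor matches the normalization of $\nu_{\alpha,\lambda}$ exactly.
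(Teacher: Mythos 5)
Your proof is correct and is essentially the paper's own argument: the paper derives this corollary in one line, stating that it ``follows immediately from \eqref{S8} by reformulation,'' which is exactly the constant chase you perform, with the hypothesis check ($\alpha+\lambda+1>0$ and $2\alpha+\lambda+3/2>1/2$ automatic for $\alpha>0$, $\lambda>-1$) and the Gamma-to-Beta collapse both handled correctly. One small caution: since \eqref{S8} is stated only for $z>0$, verifying the total mass by setting $z=0$ in \eqref{S11} is slightly circular, but your alternative direct computation via $u=t^2$ (yielding $\tfrac12 B(\lambda+1,\alpha)$) avoids this, and continuity of $\M_\alpha$ at $0$ together with dominated convergence (using $|\Omega_\lambda|\le 1$) then extends \eqref{S11} to $z=0$.
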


\begin{remark}
In view of Schoenberg's Criterion II, this integral formula with $\,\lambda = (n-2)/2\,$
provides another proof of the positive definiteness of the Mat\'ern functions.
In particular, the choice of $\,n=1\,$ gives
\begin{align*}
\M_\alpha(z) = \frac{2}{B\left(\alpha,\,1/2\right)}
\int_{0}^{\infty}\frac{\cos (zt)\,dt}{\,\left(1 + t^{2}\right)^{\alpha+ 1/2}\,},
\end{align*}
the formula obtained by Basset, Malmst\'en and Poisson (see \cite{Wa}).
\end{remark}

\section{Schoenberg matrices on $\ell^2(\N)$}
In this section we shall investigate whether Schoenberg matrices of Mat\'ern
functions or inverse multi-quadrics, in a fixed Euclidean space $\R^n$,
give rise to bounded invertible operators on the Hilbert space
$\ell^{2}(\mathbb{N})$.

As it is common in the theory of scattered data approximations, we shall deal with arbitrary
sets of type $\,X = \left\{ \bx_{j}\in \R^n : j\in\mathbb{N}\right\}\,$ satisfying
\begin{equation}\label{M1}
\delta(X) = \inf_{j\neq k} \left|\bx_{j}-\bx_{k}\right|>0,\quad \dim\left[ {\rm span}(X)\right] = d
\end{equation}
for some $\,1\le d\le n.$
Our analysis will be based on the following.

\begin{proposition}\label{propM} {\rm (\cite{GMO})}
Let $f$ be a nonnegative function defined on $[0, \infty)$.

\begin{itemize}
\item[\rm(i)] Suppose $f$ is monotone decreasing, $\,f(0) =1\,$ and
$\,f(t) t^{d-1}\,$ is integrable on $[0, \infty).$
Then the Schoenberg matrix $\,\mathbf{S}_{X}(f)$ defines a bounded self-adjoint operator on $\ell^{2}(\mathbb{N})$ with
\begin{equation*}
\left\|\mathbf{S}_{X}(f)\right\|\le 1+\frac{ d(5^d-1)}{[\delta(X)]^d}\int_{0}^{\infty}f(t)t^{d-1}dt.
\end{equation*}
Moreover, if $X$ satisfies the additional separation assumption
\begin{equation*}
\delta(X) > \left[d(5^d-1)\int_{0}^{\infty}f(t)t^{d-1}dt\right]^{1/d},
\end{equation*}
then $\,\mathbf{S}_{X}(f)$ defines a bounded invertible operator on $\ell^{2}(\mathbb{N})$.

\item[\rm(ii)] Suppose $\,n\ge 2\,$ and $f$ admits an integral representation
$$f(r) = \int_0^\infty e^{-r^2 t}\, d\nu(t)\quad(r\ge 0)$$
for a finite positive Borel measure $\nu$ such that it is equivalent to Lebesgue measure on $[0, \infty)$
and satisfies the moment condition
$$\int_0^\infty t^{-d/2}\, d\nu(t)<\infty.$$
Then $\,\mathbf{S}_{X}(f)$ defines a bounded invertible operator on $\ell^{2}(\mathbb{N})$.
\end{itemize}
\end{proposition}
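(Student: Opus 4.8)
The plan is to treat the two parts by quite different mechanisms: part (i) is a direct estimate on the matrix entries that exploits only the metric separation and the decay of $f$, while part (ii) rests on the Gaussian representation of $f$ together with a Fourier-analytic lower bound for the associated quadratic form.

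For part (i), I would first invoke the Schur test: since $\mathbf{S}_X(f)$ is a real symmetric matrix, its operator norm on $\ell^2(\N)$ is at most the maximal absolute row sum $\sup_j \sum_k f(|\bx_j - \bx_k|)$. The diagonal contributes $f(0)=1$ in each row, so everything reduces to bounding $\sum_{k\neq j} f(|\bx_j - \bx_k|)$ uniformly in $j$. Here I would run a packing argument inside the $d$-dimensional affine hull of $X$: the balls $B(\bx_k, \delta(X)/2)$ are pairwise disjoint, so grouping the points into spherical shells about $\bx_j$ and using that $f$ is monotone decreasing converts the sum into a volume integral and yields $\sum_{k\neq j} f(|\bx_j - \bx_k|) \le \frac{d(5^d-1)}{[\delta(X)]^d}\int_0^\infty f(t)t^{d-1}\,dt$, the combinatorial constant $5^d$ being exactly what a careful shell count produces. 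Adding the diagonal term gives the stated norm bound. For invertibility I would write $\mathbf{S}_X(f)=I+A$, where $I$ is the identity (from $f(0)=1$) and $A$ is the off-diagonal part; the estimate just obtained shows $\|A\|\le \frac{d(5^d-1)}{[\delta(X)]^d}\int_0^\infty f(t)t^{d-1}\,dt$, and the additional separation hypothesis is precisely the assertion that this quantity is $<1$. A Neumann series then gives invertibility together with $\|\mathbf{S}_X(f)^{-1}\|\le (1-\|A\|)^{-1}$.

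For part (ii), I would begin by noting that, via Tonelli's theorem, the moment condition $\int_0^\infty t^{-d/2}\,d\nu(t)<\infty$ is equivalent to the integrability of $f(t)t^{d-1}$ on $[0,\infty)$; since $\nu$ is finite we also have $f(0)=\nu([0,\infty))<\infty$, so boundedness of $\mathbf{S}_X(f)$ is already furnished by part (i) after the harmless normalization $f(0)=1$. The real issue is the lower bound. Inserting the Gaussian's Fourier representation $e^{-|\bx|^2 t}=(4\pi t)^{-n/2}\int_{\R^n}e^{i\bx\cdot\xi}e^{-|\xi|^2/(4t)}\,d\xi$ into the quadratic form and interchanging integrals, I would write, for finitely supported $\mathbf{a}$,
\[
\langle \mathbf{S}_X(f)\mathbf{a},\mathbf{a}\rangle
= \int_0^\infty (4\pi t)^{-n/2}\int_{\R^n} e^{-|\xi|^2/(4t)}\Big|\sum_j a_j e^{i\bx_j\cdot\xi}\Big|^2 d\xi\,d\nu(t),
\]
which exhibits positivity at once. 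To extract a strictly positive lower bound I would apply a Narcowich--Ward-type localization: for each fixed $t$ the inner expression is the Gram form of the Gaussian kernel, and the separation $\delta(X)>0$ supplies a constant $c_{n,\delta}(t)>0$ with $\langle \mathbf{S}_X(e^{-|\cdot|^2 t})\mathbf{a},\mathbf{a}\rangle \ge c_{n,\delta}(t)\|\mathbf{a}\|^2$. Integrating against $d\nu(t)$ and using that $\nu$ is equivalent to Lebesgue measure, hence charges the range of $t$ on which $c_{n,\delta}(t)$ is bounded away from zero, produces a strictly positive lower Riesz bound, which with boundedness yields invertibility.

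The main obstacle is precisely this lower bound in part (ii): producing an explicit constant $c_{n,\delta}(t)>0$ for the Gaussian Gram matrices that is uniform over all $\delta$-separated configurations, and then verifying that its $\nu$-integral is strictly positive. This is where the hypotheses genuinely enter; the equivalence of $\nu$ with Lebesgue measure prevents the measure from avoiding the favorable range of $t$, while the restriction $n\ge 2$ is what makes the localization estimate valid in the stated form. By contrast, the corresponding step in part (i) is elementary once the geometric packing lemma is in hand, and there the only delicate point is the bookkeeping that produces the exact constant $d(5^d-1)$.
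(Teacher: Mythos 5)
Your proof of part (i) is correct and is essentially identical to the paper's own proof, which is reproduced in the appendix from \cite{GMO}: the same Schur test via the maximal row sum, the same decomposition of $X\setminus\{\bx_j\}$ into shells $\,m\delta\le|\bx_k-\bx_j|<(m+1)\delta$, the same packing count $\#(X_m)\le(2m+3)^d-(2m-1)^d\le(5^d-1)m^{d-1}$ from disjointness of the balls $B(\bx_k,\delta/2)$ within the $d$-dimensional span, the same monotonicity comparison of $\sum_m f(m\delta)m^{d-1}$ with $\int_0^\infty f(t\delta)t^{d-1}dt$, and the same Neumann-series argument for invertibility in the paper's closing remark.

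Part (ii) has a different status: the paper proves nothing here (it is quoted from \cite{GMO}), so your Gaussian--Fourier route stands on its own. The outline is viable. Your Tonelli reduction is right, since $\int_0^\infty f(t)t^{d-1}dt=\tfrac12\Gamma(d/2)\int_0^\infty t^{-d/2}d\nu(t)$, the moment condition forces $\nu(\{0\})=0$, and $f$ is decreasing with $f(0)=\nu([0,\infty))<\infty$, so part (i) gives boundedness after normalization; the displayed identity for the quadratic form is also correct. But the proof is incomplete precisely at the point you flag: the uniform lower bound $c_{n,\delta}(t)>0$ for Gaussian Schoenberg matrices is the entire content of the theorem on this route, and you neither prove it nor cite it precisely. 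It is in fact a genuine, citable result of Narcowich and Ward (lower bounds for $\lambda_{\min}$ of $\bigl[e^{-t|\bx_j-\bx_k|^2}\bigr]$ depending only on $n$, $t$ and the separation, \emph{not} on the number of points; see also Wendland's book, Chapter 12), and with that citation your argument closes. A second, conceptual inaccuracy: the Narcowich--Ward bound holds in every dimension $n\ge1$ and is strictly positive for \emph{every} $t>0$, so on your route neither $n\ge2$ nor the equivalence of $\nu$ with Lebesgue measure is where anything genuinely enters; non-concentration of $\nu$ at $0$ would suffice, and your argument would prove a statement strictly stronger than Proposition \ref{propM}(ii). Those hypotheses reflect the different mechanism of the proof in \cite{GMO}, which works through Schoenberg's Criterion II, where the $n=1$ kernel $\Omega_{-1/2}(rt)=\cos(rt)$ degenerates on arithmetic progressions and forces $n\ge2$ together with support conditions on the spectral measure. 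So your closing claim that $n\ge2$ is \emph{what makes the localization estimate valid} is wrong, though it weakens rather than invalidates your conclusion; the actual gap is the unproved (but standard) Gaussian eigenvalue bound.
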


\begin{remark}
A positive Borel measure $\nu$ on $[0, \infty)$ is equivalent to Lebesgue measure $|\cdot|$ if both are absolutely
continuous with respect to each other. By the Radon-Nikodym theorem, a necessary and sufficient condition
for $\nu$ to be equivalent to Lebesgue measure is that $\,d\nu(t) = p(t) dt\,$ for a nonnegative density $p$
such that $\,{\rm supp} (p) = [0, \infty)\,$ and
$$\int_I p(t) dt =0 \,\Longleftrightarrow\, |I|=0$$
for any Borel set $\,I\subset [0, \infty).$

\end{remark}

As the operator norm bound and the invertibility condition of part (i) are slightly different from
the original ones presented in \cite{GMO}, we shall give a review of their proof for part (i) in the appendix.

Now that Schoenberg's representations are available for Mat\'ern functions of type \eqref{M},
it is a simple matter to prove the following.

\begin{theorem}\label{theoremM1}
Let $X$ be an arbitrary set of points of $\R^n$ satisfying \eqref{M1}. For $\,\alpha>0,\,$
consider the Schoenberg matrix of $\M_\alpha$,
$$\mathbf{S}_X\left(\M_\alpha\right) = \Big[ \M_\alpha\left(\bx_j - \bx_k\right)\Big]_{j, \,k\in\mathbb{N}}.$$

\begin{itemize}
\item[\rm(i)] $\,\mathbf{S}_X\left(\M_\alpha\right)$ defines a bounded self-adjoint operator on $\ell^{2}(\mathbb{N})$ with
$$\left\| \mathbf{S}_X\left(\M_\alpha\right)\right\| \le 1 + \frac{d\, 2^{d-1} (5^d -1) \Gamma\left(\alpha + \frac d2\right)\Gamma\left(\frac d2\right)}
{\left[\delta(X)\right]^d\,\Gamma(\alpha)}\,.$$

\item[\rm(ii)] For $\,n\ge 2,\,$ $\,\mathbf{S}_X\left(\M_\alpha\right)$ defines a bounded invertible operator on $\ell^{2}(\mathbb{N})$.
In the case $\,n=d=1,\,$ if $X$ satisfies the additional assumption
$$\delta(X) > \frac{4\,\Gamma\left(\alpha + \frac 12\right)\Gamma\left(\frac 12\right)}
{\Gamma(\alpha)}\,,$$
then it defines a bounded invertible operator on $\ell^{2}(\mathbb{N})$.
\end{itemize}
\end{theorem}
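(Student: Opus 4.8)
The plan is to derive both statements as direct applications of Proposition \ref{propM} with $f=\M_\alpha$, so that the entire argument reduces to verifying the hypotheses of that proposition and evaluating two elementary integrals.

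For part (i) I would first check that $\M_\alpha$ meets the structural requirements of Proposition \ref{propM}(i). Monotonicity is immediate from property (K3), which asserts that $M_\alpha(z)=K_\alpha(z)z^\alpha$ is positive and strictly decreasing on $(0,\infty)$; since $\M_\alpha$ is a positive scalar multiple of $M_\alpha$, it is decreasing as well. The normalization $\M_\alpha(0)=1$ follows from the limiting value $M_\alpha(0)=2^{\alpha-1}\Gamma(\alpha)$ recorded just before the definition \eqref{M}. The remaining task is to compute $\int_0^\infty \M_\alpha(t)t^{d-1}\,dt$. Writing this as $\frac{2^{1-\alpha}}{\Gamma(\alpha)}\int_0^\infty K_\alpha(t)t^{\alpha+d-1}\,dt$ and invoking Lemma \ref{lemmaS1} with $\beta=\alpha+d$ (legitimate since $\beta=\alpha+d>\alpha=|\alpha|$), I obtain
$$\int_0^\infty\M_\alpha(t)\,t^{d-1}\,dt=\frac{2^{d-1}\,\Gamma\!\left(\alpha+\tfrac d2\right)\Gamma\!\left(\tfrac d2\right)}{\Gamma(\alpha)},$$
which in particular confirms integrability. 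Substituting this value into the norm estimate of Proposition \ref{propM}(i) reproduces the stated bound verbatim.

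For part (ii) with $n\ge 2$ I would instead appeal to Proposition \ref{propM}(ii), using the Schoenberg representation $\M_\alpha(z)=\int_0^\infty e^{-z^2 t}f_\alpha(t)\,dt$ furnished by Theorem \ref{theorem1.1}, so that $d\nu(t)=f_\alpha(t)\,dt$. Since $f_\alpha(t)=\frac{1}{4^\alpha\Gamma(\alpha)}\exp(-\tfrac{1}{4t})t^{-\alpha-1}$ is continuous and strictly positive on $(0,\infty)$, its support is all of $[0,\infty)$ and $\nu$ is therefore equivalent to Lebesgue measure. The moment condition follows from the evaluation $\int_0^\infty t^{-d/2}f_\alpha(t)\,dt=\frac{2^d\,\Gamma(\alpha+d/2)}{\Gamma(\alpha)}<\infty$, obtained by the substitution $u=1/(4t)$, which reduces the integral to a Gamma function; this is finite for every $d\ge 1$, so invertibility holds whenever $n\ge 2$. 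The residual case is $n=1$, which forces $d=1$ by \eqref{M1}; here Proposition \ref{propM}(ii) is unavailable, so I would apply the invertibility criterion of Proposition \ref{propM}(i) instead. With $d=1$ the separation threshold $\big[d(5^d-1)\int_0^\infty\M_\alpha(t)\,dt\big]^{1/d}$ becomes $4\,\Gamma(\alpha+\tfrac12)\Gamma(\tfrac12)/\Gamma(\alpha)$, precisely the hypothesis imposed on $\delta(X)$.

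Since all the analytic weight is already packaged inside Proposition \ref{propM}, there is no genuine obstacle here; the work is purely verificational. The point demanding the most care is keeping the two dimensional parameters separate: the affine dimension $d$ of $\mathrm{span}(X)$ governs the integrals and the norm bound, whereas the ambient dimension $n$ enters only through the dichotomy $n\ge 2$ versus $n=1$ that decides which half of Proposition \ref{propM} is applicable. Beyond that one must simply confirm the restriction $\beta>|\alpha|$ when invoking Lemma \ref{lemmaS1} and the full support and positivity of $f_\alpha$ needed for the Lebesgue-equivalence of $\nu$.
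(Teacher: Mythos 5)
Your proposal is correct and follows essentially the same route as the paper's own proof: part (i) via Lemma \ref{lemmaS1} with $\beta=\alpha+d$ and the monotonicity from (K3) feeding into Proposition \ref{propM}(i), and part (ii) via the Gaussian-type Schoenberg representation $\M_\alpha(z)=\int_0^\infty e^{-z^2t}f_\alpha(t)\,dt$ with the moment evaluation $\int_0^\infty t^{-d/2}f_\alpha(t)\,dt=2^d\,\Gamma(\alpha+\tfrac d2)/\Gamma(\alpha)$ feeding into Proposition \ref{propM}(ii), with the $n=d=1$ case handled by the separation criterion of part (i). All constants match the paper's, and your explicit verification of $\beta>|\alpha|$ and of the Lebesgue-equivalence of $f_\alpha(t)\,dt$ is exactly the checking the paper performs.
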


\begin{proof}
An application of Lemma \ref{lemmaS1} gives
\begin{align*}
\int_0^\infty \M_\alpha(t) t^{d-1} dt &= \frac{2^{1-\alpha}}{\Gamma(\alpha)}\int_0^\infty K_\alpha(t) t^{\alpha + d-1} dt\\
&= \frac{2^{d-1}\Gamma\left(\alpha + \frac d2\right)\Gamma\left(\frac d2\right)}
{\Gamma(\alpha)}\,.
\end{align*}

Since $\,\M_\alpha(0) = 1\,$ and $\M_\alpha$ is strictly decreasing on the interval $[0, \infty)$ as it is noted in (K3),
the criterion in the first part of Proposition \ref{propM} is applicable and part (i) follows with the stated operator norm bound.

Concerning part (ii), we invoke Corollary \ref{corollaryS1} to represent
$$\M_\alpha(z) = \int_0^\infty e^{-z^2 t} f_\alpha(t) dt \qquad(z\ge 0)$$
in which $f_\alpha$ stands for the probability density
\begin{equation*}
f_\alpha(t) = \left\{\begin{aligned} &{\frac{1}{4^\alpha\Gamma(\alpha)}\,
\exp\left(-\frac{1}{4t}\right) t^{-\alpha-1}} &{\text{for}\quad t>0},\\
&{\qquad\qquad\,\, 0} &{\text{for}\quad t=0}.\end{aligned}\right.
\end{equation*}

Since the measure determined by $\,f_\alpha(t) dt\,$ is obviously equivalent to Lebesgue measure
on $[0, \infty)$ and it is elementary to compute
$$\int_0^\infty t^{-d/2} f_\alpha(t) dt = \frac{2^d\Gamma\left(\alpha + \frac d2\right)}{\Gamma(\alpha)} <\infty,$$
the criterion in the second part of Proposition \ref{propM} implies the invertibility of  $\,\mathbf{S}_X\left(\M_\alpha\right)$
in the case $\,n\ge 2.$ The last statement on the invertibility when $\,n=d=1\,$ follows by the first
criterion of Proposition \ref{propM}.
\end{proof}

\begin{theorem}\label{theoremM2}
For $\,\beta> n/2,\,$ put
\begin{equation}
\phi_\beta(r) = (1+ r^2)^{-\beta} \qquad(r\ge 0).
\end{equation}
Let $X$ be an arbitrary set of points of $\R^n$ satisfying \eqref{M1} and
$$\mathbf{S}_X\left(\phi_\beta\right) = \Big[ \phi_\beta\left(\bx_j - \bx_k\right)\Big]_{j, \,k\in\mathbb{N}}.$$

\begin{itemize}
\item[\rm(i)] $\,\mathbf{S}_X\left(\phi_\beta\right)$ defines a bounded self-adjoint operator on $\ell^{2}(\mathbb{N})$ with
$$\left\| \mathbf{S}_X\left(\phi_\beta\right)\right\| \le 1 + \frac{d(5^d -1) B\left(\beta-\frac d2,\,\frac d2\right)}
{2 \left[\delta(X)\right]^d}\,.$$

\item[\rm(ii)] For $\,n\ge 2,\,$ $\,\mathbf{S}_X\left(\phi_\beta\right)$ defines a bounded invertible operator on $\ell^{2}(\mathbb{N})$.
In the case $\,n=d=1,\,$ if $X$ satisfies the additional assumption
$$\delta(X) > 2 B\left(\beta- \frac 12\,,\, \frac 12\right),$$
then it defines a bounded invertible operator on $\ell^{2}(\mathbb{N})$.
\end{itemize}
\end{theorem}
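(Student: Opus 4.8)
The plan is to mirror the proof of Theorem \ref{theoremM1}, since $\phi_\beta$ possesses exactly the two structural features that make Proposition \ref{propM} applicable: it is monotone decreasing with $\phi_\beta(0)=1$, and it admits a Gaussian scale-mixture representation through \eqref{G5}. So the whole argument reduces to evaluating two elementary integrals and feeding them into the two parts of Proposition \ref{propM}.

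First, for part (i), I would check the hypotheses of Proposition \ref{propM}(i). Plainly $\phi_\beta(0)=1$ and $\phi_\beta$ is strictly decreasing on $[0,\infty)$, so the only computation required is $\int_0^\infty \phi_\beta(t)\,t^{d-1}\,dt$. The substitution $u=t^2$ turns this into a beta integral, giving $\int_0^\infty (1+t^2)^{-\beta} t^{d-1}\,dt = \tfrac12 B\big(\tfrac d2,\,\beta-\tfrac d2\big)$, which is finite because the hypothesis $\beta>n/2$ together with $d\le n$ forces $\beta>d/2$. Substituting this value into the norm estimate and the separation threshold of Proposition \ref{propM}(i) yields at once the stated operator-norm bound (using the symmetry $B(\tfrac d2,\beta-\tfrac d2)=B(\beta-\tfrac d2,\tfrac d2)$) and, in the case $n=d=1$, the invertibility under the separation assumption $\delta(X)>2B(\beta-\tfrac12,\tfrac12)$.

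Next, for the $n\ge 2$ case of part (ii), I would invoke the representation \eqref{G5}, writing $\phi_\beta(r)=\tfrac{1}{\Gamma(\beta)}\int_0^\infty e^{-r^2 t} e^{-t} t^{\beta-1}\,dt$, so that $d\nu(t)=\tfrac{1}{\Gamma(\beta)} e^{-t} t^{\beta-1}\,dt$. Two conditions must then be verified to apply Proposition \ref{propM}(ii): that $\nu$ is equivalent to Lebesgue measure, which is immediate since its density is strictly positive on $(0,\infty)$ with full support; and the moment condition $\int_0^\infty t^{-d/2}\,d\nu(t)=\Gamma(\beta-\tfrac d2)/\Gamma(\beta)<\infty$, again finite precisely because $\beta>d/2$. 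This delivers the invertibility for $n\ge 2$, while the $n=d=1$ case is handled by the first criterion of Proposition \ref{propM}, exactly as in part (i).

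I do not expect a genuine obstacle here: every hypothesis of Proposition \ref{propM} collapses to a standard gamma or beta evaluation, and the scale-mixture representation \eqref{G5} is already established in the introduction. The single point demanding care is the convergence bookkeeping, namely that $\beta>n/2$ and $d\le n$ jointly guarantee $\beta>d/2$; this one inequality is what underwrites the finiteness of both the $t^{d-1}$-integral in part (i) and the negative moment in part (ii), and hence the applicability of both halves of Proposition \ref{propM}.
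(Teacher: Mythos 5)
Your proposal is correct and coincides with the paper's own argument: the authors simply invoke the representation \eqref{G5} and state that ``the proof follows along the same scheme'' as Theorem \ref{theoremM1}, which is precisely the two-part application of Proposition \ref{propM} you carry out. Your explicit evaluations --- $\int_0^\infty (1+t^2)^{-\beta} t^{d-1}\,dt = \tfrac12 B\bigl(\tfrac d2,\,\beta-\tfrac d2\bigr)$ for part (i) and $\int_0^\infty t^{-d/2}\,d\nu(t)=\Gamma\bigl(\beta-\tfrac d2\bigr)/\Gamma(\beta)$ for part (ii), both finite since $\beta>n/2\ge d/2$ --- are exactly the computations the paper leaves implicit, and they reproduce the stated norm bound and the $n=d=1$ separation threshold $2B\bigl(\beta-\tfrac12,\tfrac12\bigr)$.
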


\begin{proof}
By using the aforementioned integral representation
$$\phi_\beta(r) = \frac{1}{\Gamma(\beta)}\int_0^\infty e^{-r^2 t} e^{-t} t^{\beta-1} dt\qquad(r\ge 0),$$
the proof follows along the same scheme as above.
\end{proof}

\begin{remark} In connection with the problem of interpolating functions at an arbitrary
set of distinct points $X$, it is an immediate consequence of Theorems \ref{theoremM1}, \ref{theoremM2} that
$\,\M_\alpha,\, \phi_\beta,\,$ with $\,\alpha>0,\,\beta>n/2,\,$ could be used
in constructing Lagrange-type radial basis sequences $\,\left\{u_j^*\right\}_{j\in\N},\,$ by the same process
pointed out in the introduction, and the interpolating functional
$$A_X(f)(\bx) =  \sum_{j=1}^\infty f(\bx_j)\,u_j^*(\bx).$$

\end{remark}

\section{Gramian matrices and Riesz sequences}
Now that Schoenberg matrices of Mat\'ern functions are shown to induce bounded and
invertible operators on $\ell^2(\N)$, it is natural to ask if they generate Riesz sequences
or bases in appropriate Hilbert spaces.

We recall that a system $\,\{f_j\}_{j\in\N}\,$ of vectors
in a Hilbert space $H$ is said to be a Riesz sequence if its moment space
is equal to $\ell^2(\N)$, that is,
$$\left\{ \mathbf{m}_f = \big\{(f, f_j)_H\big\}_{j\in\N} : f\in H\right\} = \ell^2(\N).$$
If  $\,\{f_j\}_{j\in\N}\,$ is complete in addition, it is called a Riesz basis (see \cite{Y}).
A classical theorem of Bari states a necessary and sufficient condition for
the system $\,\{f_j\}_{j\in\N}\,$ to be a Riesz sequence is that
the Gramian matrix
\begin{equation}
{\rm Gram}\Big(\{f_j\}_{j\in\N}\,;\,H\Big) = \big[\left( f_j,\,f_k\right)_H\big]_{j, \,k\in N}
\end{equation}
defines a bounded and invertible operators on $\ell^2(\N)$.

As for the sequences constructed from translating Mat\'ern functions by
distinct points, their Gramian matrices in $L^2(\R^n)$ or
Sobolev spaces turn out to be easily identifiable in terms of
Schoenberg matrices.

In order not to entangle with parameters, it is convenient to work with the Bessel
potential kernels of \eqref{G1}
\begin{equation*}
G_\alpha(\bx) = \frac{1}{2^{\alpha-1 + \frac n2}\,\pi^{\frac n2}\,
\Gamma(\alpha)}\,K_{\alpha-\frac n2}(|\bx|) |\bx|^{\alpha - \frac n2}.
\end{equation*}

\subsection{Results on $L^2(\R^n)$ space}
Concerning the square integrability, we have the following.

\begin{lemma}\label{lemmaGR1}
For $\,\lambda>-1,\,$ if $\,2\alpha + \lambda +1>0,\,$ then
\begin{equation*}
\int_0^\infty \big[K_\alpha(t) t^\alpha\big]^2 t^{2\lambda +1} dt = \frac{\sqrt{\pi}\,\,\Gamma(\alpha +\lambda +1)\Gamma(2\alpha +\lambda +1)
\Gamma(\lambda+1)}{4\,\Gamma\left(\alpha + \lambda + \frac 32\right)}\,.
\end{equation*}

In particular, if $\, \alpha + n/4>0\,$ with $n$ a positive integer, then
\begin{equation*}
\int_0^\infty \big[K_\alpha(t) t^\alpha\big]^2 t^{n-1} dt = \frac{\sqrt{\pi}\,\,\Gamma\left(\alpha +\frac n2\right)
\Gamma\left(2\alpha +\frac n2\right)\Gamma\left(\frac n2\right)}{4\,\Gamma\left(\alpha + \frac{n+1}{2}\right)}\,.
\end{equation*}
\end{lemma}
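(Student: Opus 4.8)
The plan is to read the integral as a squared $L^2$-type quantity against the weight $t^{2\lambda+1}$ and transport it to the Hankel--Schoenberg side, where the integrand becomes a single power of $(1+r^2)$ that one Beta integral dispatches. Concretely, formula \eqref{S7} of Theorem \ref{theoremS1} exhibits
$$
(1+r^2)^{-\alpha-\lambda-1} = \int_0^\infty \Omega_\lambda(rt)\, f(t)\, t^{2\lambda+1}\,dt,\qquad
f(t) = \frac{K_\alpha(t)\, t^\alpha}{2^{\alpha+2\lambda}\Gamma(\lambda+1)\Gamma(\alpha+\lambda+1)},
$$
so that $(1+r^2)^{-\alpha-\lambda-1}$ is precisely the order-$\lambda$ Hankel--Schoenberg transform of $f$. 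I would then apply Parseval's relation (Theorem \ref{Parseval}) with $f_1=f_2=f$ to obtain
$$
\int_0^\infty f(t)^2 t^{2\lambda+1}\,dt = \frac{1}{4^\lambda[\Gamma(\lambda+1)]^2}\int_0^\infty (1+r^2)^{-2\alpha-2\lambda-2}\, r^{2\lambda+1}\,dr,
$$
which rearranges into a closed form for the target integral once the right-hand integral is evaluated.

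Before invoking Parseval I must verify its two hypotheses, namely absolute convergence of the transform defining $f$ and convergence of the target integral. Using (K4) and $K_\alpha=K_{-\alpha}$, the product $K_\alpha(t)t^\alpha$ is comparable to a constant (for $\alpha\ge 0$) or to $t^{2\alpha}$ (for $\alpha<0$) near the origin, and it decays like $t^{\alpha-1/2}e^{-t}$ at infinity. Hence $\int_0^\infty |K_\alpha(t)t^\alpha|\, t^{2\lambda+1}\,dt<\infty$ exactly when $\alpha+\lambda+1>0$, while $\int_0^\infty [K_\alpha(t)t^\alpha]^2 t^{2\lambda+1}\,dt<\infty$ exactly when $2\alpha+\lambda+1>0$; since the latter forces the former (subtracting $\alpha$ when $\alpha<0$), the single standing hypothesis $2\alpha+\lambda+1>0$ legitimizes every step. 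This exponent bookkeeping at the origin is the one place that demands care, and it is also what pins down the sharp range of the lemma.

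The remainder is purely computational. I would evaluate the right-hand integral by the substitution $u=r^2$, turning it into $\tfrac12\int_0^\infty (1+u)^{-2\alpha-2\lambda-2}u^\lambda\,du=\tfrac12 B(\lambda+1,\,2\alpha+\lambda+1)$, and rewrite this Beta value through Gamma functions. Substituting back into the Parseval identity and cancelling the explicit powers of $2$ together with the factors $\Gamma(\lambda+1)$ carried by $f$ leaves
$$
\int_0^\infty [K_\alpha(t)t^\alpha]^2 t^{2\lambda+1}\,dt = 2^{2\alpha+2\lambda-1}\,\Gamma(\lambda+1)\,[\Gamma(\alpha+\lambda+1)]^2\,\frac{\Gamma(2\alpha+\lambda+1)}{\Gamma(2\alpha+2\lambda+2)}.
$$
Finally I would apply Legendre's duplication formula $\Gamma(2z)=\pi^{-1/2}2^{2z-1}\Gamma(z)\Gamma(z+\tfrac12)$ with $z=\alpha+\lambda+1$ to expand $\Gamma(2\alpha+2\lambda+2)$; this collapses the surviving power of $2$ to $\tfrac14$ and removes one factor $\Gamma(\alpha+\lambda+1)$, yielding the stated $\sqrt{\pi}\,\Gamma(\alpha+\lambda+1)\Gamma(2\alpha+\lambda+1)\Gamma(\lambda+1)/[4\,\Gamma(\alpha+\lambda+\tfrac32)]$. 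The ``in particular'' assertion is then just the specialization $\lambda=(n-2)/2$, for which $2\lambda+1=n-1$ and the hypothesis reads $\alpha+n/4>0$; inserting these values gives the second displayed identity directly.
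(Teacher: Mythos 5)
Your proof is correct and follows essentially the same route as the paper's: Parseval's relation (Theorem \ref{Parseval}) applied to formula \eqref{S7}, evaluation of the resulting Beta integral (the paper substitutes $u=1/(1+r^2)$ rather than $u=r^2$, with the same value $\tfrac12 B(2\alpha+\lambda+1,\,\lambda+1)$), and Legendre's duplication formula. Your explicit verification of the absolute-convergence hypotheses and of the implication that $2\alpha+\lambda+1>0$ together with $\lambda>-1$ forces $\alpha+\lambda+1>0$ is a careful addition that the paper leaves implicit.
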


\begin{proof}
An application of Parseval's relation, Theorem \ref{Parseval}, for the Hankel-Schoenberg transforms
to formula \eqref{S7} of Theorem \ref{theoremS1} gives
\begin{align*}
\int_0^\infty \big[K_\alpha(t) t^\alpha\big]^2 t^{2\lambda +1} dt = \big[2^{\alpha+\lambda}\,\Gamma(\alpha +\lambda+1)\big]^2
\int_0^\infty \frac{r^{2\lambda+1}\,dr}{(1+ r^2)^{2\alpha+ 2\lambda+ 2}}.
\end{align*}
By making substitution $\, u = 1/(1+r^2),\,$ we compute
\begin{align*}
\int_0^\infty \frac{r^{2\lambda+1}\,dr}{(1+ r^2)^{2\alpha+ 2\lambda+ 2}}
&= \frac 12 \int_0^1 u^{2\alpha + \lambda} (1-u)^\lambda du\\
&= \frac 12\,B(2\alpha +\lambda+1,\,\lambda+1)
\end{align*}
and the stated formula follows on simplifying constants by using Legendre's duplication formula
for the Gamma function. The second stated formula corresponds to a special case of the first one with
$\,\lambda = n/2 -1.$
\end{proof}

\begin{theorem}\label{theoremGR1}
If $\,\alpha>n/4,\,$ then for any $\,\bx, \,\mathbf{y}\in\R^n,\,$
\begin{align}\label{GR1}
\big( G_{\alpha}(\cdot-\bx),\, G_\alpha(\cdot-\mathbf{y})\big)_{L^2(\R^n)}
= G_{2\alpha} (\bx-\mathbf{y}).
\end{align}
As a consequence, for any sequence of distinct points $\,(\bx_j)_{j\in\N}\subset \R^n,\,$
the Gramian matrix of the system $\,\big\{ G_\alpha(\bx- \bx_j) \big\}_{j\in\N}\subset
L^2(\R^n)\,$ coincides with the Schoenberg matrix of $G_{2\alpha}$, that is,
\begin{align*}
{\rm Gram}\Big(\big\{ G_\alpha(\bx- \bx_j)\big\}_{j\in\N}\,;\,L^2(\R^n)\Big)
= \Big[G_{2\alpha} \left(\bx_j- \bx_k\right)\Big]_{j, \,k\in\N}\,.
\end{align*}
\end{theorem}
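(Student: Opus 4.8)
The plan is to reduce the $L^2$ pairing in \eqref{GR1} to a convolution and then invoke the semigroup identity (b), namely $(G_\alpha\ast G_\beta)(\bx)=G_{\alpha+\beta}(\bx)$. Before anything else I would check that the hypothesis $\alpha>n/4$ is precisely what guarantees $G_\alpha\in L^2(\R^n)$, so that the inner product is finite and every manipulation below is legitimate. Passing to polar coordinates and recalling that $G_\alpha(t)$ is a fixed constant times $K_{\alpha-n/2}(t)\,t^{\alpha-n/2}$, one has
\[
\|G_\alpha\|_{L^2(\R^n)}^2=\frac{2\pi^{n/2}}{\Gamma(n/2)}\int_0^\infty\big[G_\alpha(t)\big]^2 t^{n-1}\,dt ,
\]
and Lemma \ref{lemmaGR1}, applied with Bessel order $\alpha-n/2$ in place of $\alpha$, shows this integral is finite exactly when $(\alpha-n/2)+n/4>0$, i.e. when $\alpha>n/4$.

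For the identity itself I would exploit that $G_\alpha$ is real-valued and radial, hence even. Writing the pairing as an integral in $\mathbf{z}\in\R^n$, replacing $\overline{G_\alpha(\mathbf{z}-\by)}$ by $G_\alpha(\by-\mathbf{z})$, and substituting $\mathbf{u}=\mathbf{z}-\bx$ gives
\[
\big(G_\alpha(\cdot-\bx),\,G_\alpha(\cdot-\by)\big)_{L^2(\R^n)}
=\int_{\R^n}G_\alpha(\mathbf{u})\,G_\alpha\big((\by-\bx)-\mathbf{u}\big)\,d\mathbf{u}
=(G_\alpha\ast G_\alpha)(\by-\bx).
\]
Property (b) with $\beta=\alpha>0$ now identifies the right-hand side as $G_{2\alpha}(\by-\bx)$, and one last use of the radiality of $G_{2\alpha}$ turns this into $G_{2\alpha}(\bx-\by)$, which is exactly \eqref{GR1}.

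The one point that genuinely needs care---and the step I expect to be the main obstacle---is that property (b) supplies the convolution identity only almost everywhere, while \eqref{GR1} claims a pointwise equality valid for all $\bx,\by$. I would settle this by observing that both sides are continuous functions of $\bx-\by$: the convolution of the two $L^2$ functions $G_\alpha$ is bounded and continuous, whereas $G_{2\alpha}$ is a Mat\'ern function whose Bessel order $2\alpha-n/2$ is positive under our hypothesis and is therefore continuous on $[0,\infty)$ by the asymptotics in (K4). Two continuous functions agreeing almost everywhere agree everywhere, so \eqref{GR1} holds for every pair $\bx,\by$. If one prefers to argue on the Fourier side, an equivalent route is to apply Plancherel together with $\widehat{G_\alpha(\cdot-\bx)}(\xi)=e^{-i\bx\cdot\xi}(1+|\xi|^2)^{-\alpha}$ from \eqref{S9}, reducing the pairing to $(2\pi)^{-n}\int_{\R^n}e^{-i(\bx-\by)\cdot\xi}(1+|\xi|^2)^{-2\alpha}\,d\xi$, which is the Fourier inversion of $\widehat{G_{2\alpha}}$ evaluated at $\bx-\by$; here the integrability of $(1+|\xi|^2)^{-2\alpha}$ again demands precisely $\alpha>n/4$.

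Finally, the consequence for the Gramian is immediate. Specializing \eqref{GR1} to $\bx=\bx_j$ and $\by=\bx_k$ shows that the $(j,k)$ entry $\big(G_\alpha(\cdot-\bx_j),\,G_\alpha(\cdot-\bx_k)\big)_{L^2(\R^n)}$ equals $G_{2\alpha}(\bx_j-\bx_k)$, which is by definition the $(j,k)$ entry of the Schoenberg matrix $\mathbf{S}_X(G_{2\alpha})$; hence the Gramian matrix of $\{G_\alpha(\bx-\bx_j)\}_{j\in\N}$ coincides with $\mathbf{S}_X(G_{2\alpha})$.
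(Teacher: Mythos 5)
Your proof is correct and follows essentially the same route as the paper's: reduce the inner product to $(G_\alpha\ast G_\alpha)(\bx-\by)$ by radial symmetry and identify the convolution with $G_{2\alpha}$ via the Fourier transform formula \eqref{S9} (equivalently, the semigroup property (b)), with square integrability coming from Lemma \ref{lemmaGR1}. Your additional step upgrading the almost-everywhere convolution identity to a pointwise equality by continuity is a refinement the paper passes over silently, but it does not change the approach.
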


\begin{proof}
By Lemma \ref{lemmaGR1}, $\,G_\alpha\in L^2(\R^n).\,$ Due to radial symmetry,
\begin{align*}
\big( G_\alpha(\cdot-\bx),\, G_\alpha(\cdot-\mathbf{y})\big)_{L^2(\R^n)}
&=\int_{\R^n} G_\alpha(\mathbf{u}-\bx) G_\alpha(\mathbf{u} -\mathbf{y}) d\mathbf{u}\\
&= \int_{\R^n} G_\alpha(\bx - \mathbf{y} -\mathbf{w}) G_\alpha(\mathbf{w}) d\mathbf{w}\\
&= \left(G_\alpha\ast G_\alpha\right)(\bx-\mathbf{y}).
\end{align*}
On the Fourier transform side, formula \eqref{S9} gives
\begin{align*}
\widehat{G_\alpha\ast G_\alpha}(\xi)
= (1+|\xi|^2)^{-2\alpha} = \widehat{G_{2\alpha}}(\xi),
\end{align*}
whence $\,G_\alpha \ast G_\alpha = G_{2\alpha}\,$ and the result follows.
\end{proof}

\begin{remark} This result extends the work of L. Golinskii {\it et al.} \cite{GMO} in which the
authors dealt only with the range $\,n/4<\alpha\le n/2.$

\begin{itemize}
\item[(a)] In the case when both $\,\alpha-n/2\,$ and $\,2\alpha - n/2\,$ are halves of odd integers,
 it is possible to write $L^2$ inner products explicitly with the aid of (K5).
 As illustrations in $\R^3$, we take $\,\alpha = 1, \,2\,$ to obtain
\begin{align*}
&\qquad\qquad\int_{\R^3} \frac{e^{-|\mathbf{u} -\bx| - |\mathbf{u}-\by|}}
{|\mathbf{u}-\bx|\,|\mathbf{u}-\by|}\,d\mathbf{u}
= 2\pi\,e^{-|\bx-\by|}\,,\\
&\int_{\R^3} e^{-|\mathbf{u} -\bx| - |\mathbf{u}-\by|}\,d\mathbf{u}
= \pi\,e^{-|\bx-\by|}\left( 1+ |\bx-\by| + \frac{|\bx-\by|^2}{3}\right)
\end{align*}
for which the first formula is of considerable interest in the spectral
analysis for the Schr\"odinger equations (see \cite{MS}).

\item[(b)] To reformulate \eqref{GR1} in a more direct fashion, put
\begin{equation}\label{GR2}
F_\alpha(\bx) = \frac{1}{2^{\alpha +n -1} \pi^{\frac n2}\Gamma\left(\alpha + \frac n2\right)}\,
K_\alpha(|\bx|) |\bx|^\alpha\,.
\end{equation}
As an alternative of \eqref{GR1}, if $\,\alpha>-n/4,\,$ then
\begin{align}
\big( F_\alpha(\cdot-\bx),\, F_\alpha(\cdot-\mathbf{y})\big)_{L^2(\R^n)}
= F_{2\alpha + \frac n2} (\bx-\mathbf{y}).
\end{align}
\end{itemize}
\end{remark}

As it is shown in Theorem \ref{theoremM1} that
the Schoenberg matrices of
$$G_{2\alpha}(z) = \frac{\Gamma(2\alpha-n/2)}{(4\pi)^{n/2}\,\Gamma(2\alpha)}\,
\M_{2\alpha -n/2}(z)\qquad(z>0)$$
define bounded and invertible
operators on $\ell^2(\N)$ as long as $\,\alpha>n/4,$ we obtain
the following from Bari's theorem and Theorem \ref{theoremGR1}.

\begin{corollary} Let $\,\alpha>n/4\,$ and
$\,X =\left\{\bx_j\in\R^n : j\in\N\right\}\,$ be arbitrary with
$$\delta(X) = \inf_{j\ne k}\,\left|\bx_j - \bx_k\right|\,>0.$$

\begin{itemize}
\item[\rm(i)] If $\,n\ge 2,\,$ then $\,\big\{ G_\alpha(\bx- \bx_j) \big\}_{j\in\N}\,$
forms a Riesz sequence in $L^2(\R^n).$
\item[\rm(ii)] In the case $\,n=1,\,$ if $X$ is separated with
$$\delta(X)> \frac{4\,\Gamma(2\alpha) \Gamma(1/2)}{\Gamma(2\alpha-1/2)},$$
then $\,\big\{ G_\alpha(x - x_j) \big\}_{j\in\N}\,$
forms a Riesz sequence in $L^2(\R).$
\end{itemize}
\end{corollary}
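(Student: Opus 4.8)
The plan is to obtain the Riesz sequence property as an immediate application of Bari's theorem, which asserts that $\{G_\alpha(\bx-\bx_j)\}_{j\in\N}$ is a Riesz sequence in $L^2(\R^n)$ if and only if its Gramian matrix defines a bounded and invertible operator on $\ell^2(\N)$. The decisive reduction is supplied by Theorem \ref{theoremGR1}: since the hypothesis $\alpha>n/4$ is exactly what places $G_\alpha$ in $L^2(\R^n)$ (via the square-integrability computed in Lemma \ref{lemmaGR1}), that theorem identifies the Gramian matrix of the translated system with the Schoenberg matrix $\mathbf{S}_X(G_{2\alpha})$. Thus the entire question collapses to showing that $\mathbf{S}_X(G_{2\alpha})$ is bounded and invertible on $\ell^2(\N)$.

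The next step is to recast $G_{2\alpha}$ as a Mat\'ern function of the kind governed by Theorem \ref{theoremM1}. Writing $\beta=2\alpha-n/2$, the hypothesis $\alpha>n/4$ forces $\beta>0$, and a direct comparison of the definitions \eqref{G1} and \eqref{M} yields the identity $G_{2\alpha}(z)=\frac{\Gamma(2\alpha-n/2)}{(4\pi)^{n/2}\Gamma(2\alpha)}\,\M_{\beta}(z)$ already recorded above the statement, with a strictly positive and finite constant (positivity of $\Gamma(2\alpha-n/2)$ being precisely where $\beta>0$ is used). Because the two kernels differ only by this fixed positive scalar, $\mathbf{S}_X(G_{2\alpha})$ is bounded and invertible exactly when $\mathbf{S}_X(\M_\beta)$ is, scaling a bounded invertible self-adjoint operator by a positive number preserving both properties.

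It then remains to quote Theorem \ref{theoremM1} at the order $\beta=2\alpha-n/2>0$. For $n\ge 2$, part (ii) of that theorem gives that $\mathbf{S}_X(\M_\beta)$ is bounded and invertible under the sole assumption $\delta(X)>0$, which establishes statement (i). For $n=1$, where necessarily $d=1$, part (ii) instead demands the separation condition $\delta(X)>4\Gamma(\beta+1/2)\Gamma(1/2)/\Gamma(\beta)$; substituting $\beta=2\alpha-1/2$, so that $\beta+1/2=2\alpha$, converts this into $\delta(X)>4\Gamma(2\alpha)\Gamma(1/2)/\Gamma(2\alpha-1/2)$, which is exactly the hypothesis of statement (ii). Invoking Bari's theorem in each case then delivers the claimed Riesz sequences.

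There is no real analytic obstacle: the argument simply chains Theorem \ref{theoremGR1} and Theorem \ref{theoremM1} through Bari's criterion. The only steps needing care are the bookkeeping ones, namely verifying that the proportionality constant between $G_{2\alpha}$ and $\M_{2\alpha-n/2}$ is positive and finite, and checking that the one-dimensional separation threshold produced by Theorem \ref{theoremM1} agrees with the stated constant after the substitution $\beta+1/2=2\alpha$. I would emphasize at the outset that the condition $\alpha>n/4$ is doing double duty, guaranteeing $G_\alpha\in L^2(\R^n)$ so that the Gramian is even defined, and keeping the Mat\'ern order $\beta=2\alpha-n/2$ positive so that Theorem \ref{theoremM1} applies.
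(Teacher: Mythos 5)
Your proposal is correct and takes essentially the same route as the paper, which derives the corollary in a single sentence by chaining Bari's theorem, the identification ${\rm Gram}\big(\{G_\alpha(\bx-\bx_j)\}\,;\,L^2(\R^n)\big)=\mathbf{S}_X(G_{2\alpha})$ from Theorem \ref{theoremGR1}, and Theorem \ref{theoremM1} applied through the identity $G_{2\alpha}=\frac{\Gamma(2\alpha-n/2)}{(4\pi)^{n/2}\Gamma(2\alpha)}\,\M_{2\alpha-n/2}$. Your explicit bookkeeping --- that $\alpha>n/4$ both puts $G_\alpha$ in $L^2(\R^n)$ and makes the Mat\'ern order $\beta=2\alpha-n/2$ positive, and that the $n=1$ threshold transforms via $\beta+\tfrac12=2\alpha$ into the stated constant --- is exactly what the paper leaves implicit.
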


\subsection{Results on Sobolev spaces}
An important feature of the Sobolev space $H^\alpha(\R^n)$ with $\,\alpha>n/2\,$
is that it is a reproducing kernel Hilbert space with the kernel
$G_{\alpha}(\bx-\by)$ so that it may be viewed as the space of functions of type
$$f(\bx)= \sum_{j=1}^\infty a_j \,G_{\alpha}(\bx - \bx_j),$$
where $\,(a_j)\in\ell^2(\N)\,$ and $\,(\bx_j)\subset\R^n\,$ are arbitrary (see \cite{A}).
Thus it is reasonable to expect that the system $\,
\left\{G_{\alpha}(\bx - \bx_j)\right\}_{j\in\N}\subset H^\alpha(\R^n)\,$ may serve as a Riesz sequence
or a Riesz basis in its closed linear span once the translation points $(\bx_j)$ were scattered all
over some planes of $\R^n$.

As a matter of fact, the reproducing property implies
\begin{equation}\label{GR3}
\big( G_{\alpha}(\cdot-\bx),\, G_{\alpha}(\cdot-\by)\big)_{H^\alpha(\R^n)}
= G_{\alpha} (\bx-\by)
\end{equation}
for all $\,\bx, \,\by\in\R^n\,$ and our foregoing analysis yields

\begin{theorem}
Let $\,\alpha>n/2\,$ and $\,X =\left\{\bx_j\in\R^n : j\in\N\right\}\,$ be arbitrary with
$$\delta(X) = \inf_{j\ne k}\,\left|\bx_j - \bx_k\right|>0.$$

\begin{itemize}
\item[\rm(i)] If $\,n\ge 2,\,$ then $\,\big\{ G_{\alpha}(\bx- \bx_j) \big\}_{j\in\N}\,$
forms a Riesz sequence in $H^\alpha(\R^n).$
\item[\rm(ii)] In the case $\,n=1,\,$ if $X$ is separated with
$$\delta(X)> \frac{4\,\Gamma(\alpha) \Gamma(1/2)}{\Gamma(\alpha- 1/2)},$$
then $\,\big\{ G_{\alpha}(x - x_j) \big\}_{j\in\N}\,$
forms a Riesz sequence in $H^\alpha(\R).$
\end{itemize}
\end{theorem}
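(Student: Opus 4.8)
The plan is to invoke Bari's theorem, which reduces the assertion that $\big\{G_\alpha(\bx - \bx_j)\big\}_{j\in\N}$ is a Riesz sequence to the single statement that its Gramian matrix in $H^\alpha(\R^n)$ defines a bounded and invertible operator on $\ell^2(\N)$. Since $\,\alpha>n/2,\,$ the space $H^\alpha(\R^n)$ is a reproducing kernel Hilbert space with kernel $G_\alpha(\bx-\by)$, so each translate $G_\alpha(\cdot - \bx_j)$ already lies in $H^\alpha(\R^n)$ and the reproducing identity \eqref{GR3} gives
\begin{equation*}
\big(G_\alpha(\cdot - \bx_j),\, G_\alpha(\cdot - \bx_k)\big)_{H^\alpha(\R^n)} = G_\alpha\big(\bx_j - \bx_k\big).
\end{equation*}
Consequently the Gramian matrix of the system coincides exactly with the Schoenberg matrix $\mathbf{S}_X(G_\alpha)$, and the problem is transferred to establishing the boundedness and invertibility of $\mathbf{S}_X(G_\alpha)$ on $\ell^2(\N)$.

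Next I would rewrite $G_\alpha$ as a positive multiple of a Mat\'ern function of the type already handled in Theorem \ref{theoremM1}. Comparing the definition \eqref{G1} with \eqref{M}, a direct simplification of the constants (isolating $K_{\alpha-n/2}(z)z^{\alpha-n/2}$ from \eqref{M} and collecting the powers of $2$) yields
\begin{equation*}
G_\alpha(z) = \frac{\Gamma\!\left(\alpha - \tfrac n2\right)}{(4\pi)^{n/2}\,\Gamma(\alpha)}\,\M_{\alpha - \frac n2}(z)\qquad(z>0),
\end{equation*}
and the hypothesis $\,\alpha>n/2\,$ guarantees that the order $\,\beta:=\alpha-\tfrac n2\,$ is positive, so that $\M_\beta$ is exactly the continuous positive definite Mat\'ern function appearing in \eqref{M}. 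Since scaling a matrix by a fixed positive constant preserves both boundedness and invertibility, $\mathbf{S}_X(G_\alpha)$ is bounded and invertible on $\ell^2(\N)$ if and only if $\mathbf{S}_X(\M_\beta)$ is, and the latter is precisely the content of Theorem \ref{theoremM1}.

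Finally I would split into the two stated regimes. For $\,n\ge 2,\,$ part (ii) of Theorem \ref{theoremM1} applies to $\M_\beta$ with $\,\beta=\alpha-\tfrac n2>0\,$ with no separation requirement, giving the invertibility and hence the Riesz sequence property asserted in (i). For $\,n=1\,$ (so $\,d=1\,$ and $\,\beta=\alpha-\tfrac12$), the same part of Theorem \ref{theoremM1} demands the separation threshold $\,\delta(X) > 4\,\Gamma\!\left(\beta+\tfrac12\right)\Gamma\!\left(\tfrac12\right)/\Gamma(\beta)$; substituting $\,\beta+\tfrac12=\alpha\,$ and $\,\Gamma(\beta)=\Gamma\!\left(\alpha-\tfrac12\right)\,$ converts this into the stated condition $\,\delta(X) > 4\,\Gamma(\alpha)\Gamma\!\left(\tfrac12\right)/\Gamma\!\left(\alpha-\tfrac12\right)$, which yields (ii). The only genuine bookkeeping is the constant-matching in the displayed relation for $G_\alpha$ and the translation of the $n=1$ threshold through $\,\beta=\alpha-\tfrac12$; there is no analytic obstacle, since everything reduces to a direct assembly of the reproducing identity \eqref{GR3}, Theorem \ref{theoremM1}, and Bari's theorem.
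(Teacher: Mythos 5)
Your proposal is correct and follows essentially the same route the paper intends: the reproducing identity \eqref{GR3} identifies the Gramian with the Schoenberg matrix $\mathbf{S}_X(G_\alpha)$, the rescaling $G_\alpha(z) = \frac{\Gamma(\alpha-\frac n2)}{(4\pi)^{n/2}\,\Gamma(\alpha)}\,\M_{\alpha-\frac n2}(z)$ (exactly the constant the paper records for $G_{2\alpha}$, with $2\alpha$ replaced by $\alpha$) transfers the problem to Theorem \ref{theoremM1}, and Bari's theorem converts invertibility into the Riesz sequence property. Your bookkeeping is accurate as well, including the translation of the $n=1$ threshold through $\beta=\alpha-\tfrac12$, which reproduces $\delta(X)>4\,\Gamma(\alpha)\Gamma(1/2)/\Gamma(\alpha-1/2)$.
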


Regarding the problem of determining if the sequences of translates by inverse multi-quadrics
give rise to Riesz sequences, we introduce a class of function spaces defined in terms of Fourier transforms as follows.

\begin{definition}
For $\,\alpha>0,$
\begin{align*}
\mathcal{K}_\alpha(\R^n) = \left\{ f\in C(\R^n)\cap L^2(\R^n) :
\int_{\R^n} \frac{\big|\widehat f(\mathbf{\xi})\big|^2 d\mathbf{\xi}}
{ K_{\alpha} (|\xi|) |\xi|^{\alpha}} <\infty\right\}.
\end{align*}
\end{definition}

A theorem of R. Schaback \cite{S} and H. Wendland (\cite{We}, Theorem 10.27) states
if $\,\Phi\in C(\R^n)\cap L^1(\R^n),\,$ real-valued and positive definite,
the Hilbert space of functions on $\R^n$ with the reproducing kernel $\Phi(\mathbf{x}-\mathbf{y})$ coincides with
\begin{align*}
\mathcal{H}(\R^n) = \left\{ f\in C(\R^n)\cap L^2(\R^n) : \int_{\R^d} \frac{\big|\widehat f(\mathbf{\xi})\big|^2 d\mathbf{\xi}}
{\widehat{\Phi}(\mathbf{\xi})} <\infty\right\}
\end{align*}
for which the inner product is defined by
\begin{align*}
\bigl(f,\,g\bigr)_{\mathcal{H}(\R^n)} = (2\pi)^{-n}\int_{\R^n} \frac{\widehat{f}(\mathbf{\xi})
\overline{\,\widehat{g}(\mathbf{\xi})} \,d\mathbf{\xi}}{\widehat{\Phi}(\mathbf{\xi})}.
\end{align*}

As a consequence, it is simple to find that the space $\K_\alpha(\R^n)$ arises as
a reproducing kernel Hilbert space with an appropriate multi-quadrics as its reproducing kernel.
To be precise, we have the following results.

\begin{theorem}
For $\,\beta>n/2,\,$ consider the inverse multi-quadrics
$$\phi_\beta(\bx) = (1+|\bx|^2)^{-\beta}.$$

\begin{itemize}
\item[\rm(i)] The Hilbert space of functions on $\R^n$ with the reproducing kernel $\phi_\beta$
coincides with $\,\K_{\beta-n/2}(\R^n)\,$ for which the inner product is defined by
\begin{align}\label{GR4}
\bigl(f,\,g\bigr)_{\K_{\beta-n/2}(\R^n)} = (2\pi)^{-2n}\int_{\R^n} \frac{\widehat{f}(\mathbf{\xi})
\overline{\,\widehat{g}(\mathbf{\xi})} \,d\mathbf{\xi}}{G_\beta(\mathbf{\xi})}.
\end{align}

\item[\rm(ii)] Let $\,X =\left\{\bx_j\in\R^n : j\in\N\right\}\,$ be arbitrary with
$$\delta(X) = \inf_{j\ne k}\,\left|\bx_j - \bx_k\right|>0.$$
Then the system $\,\big\{ \phi_\beta(\bx- \bx_j) \big\}_{j\in\N}\,$
forms a Riesz sequence in $\K_{\beta-n/2}(\R^n)\,$ for any $\,n\ge 2\,$ and for $\,n=1\,$
under the additional assumption
$$\delta(X)> 2 B(\beta-1/2,\,1/2).$$
\end{itemize}
\end{theorem}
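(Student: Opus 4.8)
The plan is to obtain part (i) as a direct application of the Schaback--Wendland theorem recalled just above, and then to read off part (ii) from the reproducing property furnished by part (i), combined with Theorem \ref{theoremM2} and Bari's theorem.

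For part (i), I would first check that $\phi_\beta$ satisfies the hypotheses of the Schaback--Wendland theorem with $\Phi=\phi_\beta$: continuity and real-valuedness are immediate, membership in $L^1(\R^n)$ holds precisely because $\beta>n/2$, and positive definiteness on $\R^n$ is guaranteed by the representation \eqref{G5}. The only computation needed is $\widehat{\phi_\beta}$. Since the Fourier transform formula \eqref{S9} gives $\widehat{G_\beta}(\xi)=(1+|\xi|^2)^{-\beta}=\phi_\beta(\xi)$ and $G_\beta$ is radial, Fourier duality yields $\widehat{\phi_\beta}(\xi)=(2\pi)^n G_\beta(\xi)$, which is strictly positive. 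Substituting this weight into the Schaback--Wendland space and recalling from \eqref{G1} that $G_\beta(\xi)$ is a positive constant multiple of $K_{\beta-n/2}(|\xi|)\,|\xi|^{\beta-n/2}$, the finiteness condition defining $\mathcal{H}(\R^n)$ becomes identical to the one defining $\K_{\beta-n/2}(\R^n)$, so the two spaces coincide as sets. The same substitution converts the Schaback--Wendland inner product $(2\pi)^{-n}\int \widehat{f}\,\overline{\widehat{g}}\,/\,\widehat{\phi_\beta}$ into $(2\pi)^{-2n}\int \widehat{f}\,\overline{\widehat{g}}\,/\,G_\beta$, which is exactly \eqref{GR4}.

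For part (ii), I would invoke that, by part (i), $\phi_\beta(\bx-\by)$ is the reproducing kernel of $\K_{\beta-n/2}(\R^n)$. Each translate $\phi_\beta(\cdot-\bx_j)$ then lies in the space, and the reproducing identity together with the evenness of $\phi_\beta$ gives $\big(\phi_\beta(\cdot-\bx_j),\,\phi_\beta(\cdot-\bx_k)\big)_{\K_{\beta-n/2}}=\phi_\beta(\bx_j-\bx_k)$. Hence the Gramian matrix of $\{\phi_\beta(\cdot-\bx_j)\}_{j\in\N}$ coincides with the Schoenberg matrix $\mathbf{S}_X(\phi_\beta)$. Theorem \ref{theoremM2} already establishes that $\mathbf{S}_X(\phi_\beta)$ is a bounded invertible operator on $\ell^2(\N)$ for every $n\ge 2$, and for $n=1$ under the separation hypothesis $\delta(X)>2B(\beta-1/2,\,1/2)$. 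By Bari's theorem this boundedness and invertibility of the Gramian is precisely the assertion that $\{\phi_\beta(\cdot-\bx_j)\}_{j\in\N}$ is a Riesz sequence, and part (ii) follows.

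The step I expect to require the most care is the Fourier-transform identity $\widehat{\phi_\beta}=(2\pi)^n G_\beta$ and the attendant bookkeeping of normalizing constants, which must be tracked so that the inner product lands on exactly the factor $(2\pi)^{-2n}$ in \eqref{GR4}; the remaining work is assembly of results already available. A minor secondary point is verifying the Schaback--Wendland hypotheses, where the integrability $\phi_\beta\in L^1(\R^n)$ is exactly the standing assumption $\beta>n/2$.
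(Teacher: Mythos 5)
Your proposal is correct and follows essentially the same path as the paper: verify the Schaback--Wendland hypotheses, establish $\widehat{\phi_\beta}=(2\pi)^n G_\beta$, match the weight and constants to get \eqref{GR4}, then use the reproducing property to identify the Gramian with $\mathbf{S}_X(\phi_\beta)$ and conclude via Theorem \ref{theoremM2} and Bari's theorem. The only (immaterial) variation is that you obtain $\widehat{\phi_\beta}=(2\pi)^n G_\beta$ by Fourier duality from \eqref{S9} --- legitimate here since $G_\beta\in L^1$, $\phi_\beta\in L^1$ for $\beta>n/2$, and $G_\beta$ is even and continuous --- whereas the paper reads it off directly from the Hankel--Schoenberg representation of Corollary \ref{corollaryS1}, which is the already-inverted form of the same transform identity \eqref{S7}.
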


\begin{proof}
Obviously, $\phi_\beta$ is continuous, integrable and  positive definite.
By an application of the Hankel-Schoenberg transform formula for $\phi_\beta$ as stated in
Corollary \ref{corollaryS1}, we have
$\,\,\widehat{\phi_\beta}(\xi) = (2\pi)^n G_\beta(\xi)\,$
and hence part (i) follows
by the aforementioned theorem of Schaback and Wendland.

By the reproducing property, the Gramian matrix is given by
\begin{align*}
{\rm Gram}\Big(\big\{\phi_\beta(\bx- \bx_j)\big\}_{j\in\N}\,;\,\K_{\beta-n/2}(\R^n)\Big)
= \Big[\phi_{\beta} \left(\bx_j- \bx_k\right)\Big]_{j, \,k\in\N}\,.
\end{align*}
and part (ii) follows immediately from Theorem \ref{theoremM2}.
\end{proof}

\begin{remark}
As the Mat\'ern functions of positive order are bounded smooth functions with exponential decays,
it is evident $\,\K_\alpha(\R^n)\subset H^\infty(\R^n)\,$ for any $\,\alpha>0.$
In the special case $\,\beta= (n+1)/2,\,$ we note
$$\K_{1/2}(\R^n) = \left\{f\in C(\R^n)\cap L^2(\R^n) :
\int_{\R^n} e^{\,|\xi|} \,\big|\widehat f(\mathbf{\xi})\big|^2\, d\mathbf{\xi} <\infty\right\},
$$
which is the reproducing kernel Hilbert space with the Poisson kernel
$$\phi_{\frac{n+1}{2}}(\bx) = (1+|\bx|^2)^{-\frac{n+1}{2}}.$$
\end{remark}

\bigskip

\section{Appendix: $\ell^2(\N)$-Boundedness}

For the sake of completeness, we reproduce the proof of L. Golinskii {\it et al.} \cite{GMO}
for part (i) of Proposition \ref{propM} which states

\begin{itemize}{\it
\item[{}] Suppose that $f$ is a nonnegative monotone decreasing function on $[0, \infty)$ such that
$\,f(0) =1\,$ and the function $\,f(t) t^{d-1}\,$ is integrable on $[0, \infty).$
For any $\,X\subset \R^n\,$ satisfying the condition \eqref{M1}, the Schoenberg matrix
$\,\mathbf{S}_{X}(f)$ defines a bounded self-adjoint operator on $\ell^{2}(\mathbb{N})$ with
\begin{equation}\label{A}
\left\|\mathbf{S}_{X}(f)\right\|\le 1+\frac{ d(5^d-1)}{[\delta(X)]^d}\int_{0}^{\infty}f(t)t^{d-1}dt.
\end{equation}}
\end{itemize}

\medskip
\paragraph{Proof.}
Let us write $\,\delta = \delta(X)\,$ and assume $\,{\rm span}(X)\simeq\R^{d}\,$ for simplicity.
We fix $j$ and estimate the infinite sum
\begin{align*}
A_j &\equiv \sum_{k=1}^{\infty}f(|\bx_k- \bx_j|)=1+\sum_{m=1}^{\infty}\sum_{\bx_{k}\in X_{m}}f(|\bx_k- \bx_j|)\,,\quad\text{where}\\
X_{m} &=\big\{\bx_{k}\in X : m\delta \le |\bx_k- \bx_j|< (m+1)\delta\big\}\,.
\end{align*}
In terms of the open balls $\,B(\bx_k,\delta/2)\subset \R^n\,,$ a geometric inspection reveals
\begin{align*}
\#(X_{m}) &\le \frac{\,\mathrm{vol}\Big(\Big\{\by\in \R^d : \left(m-\frac{1}{2}\right)\delta\le |\by - \bx_j|<
\left(m+\frac{3}{2}\right)\delta\Big\}\Big)\,}
{\text{vol}\big(\,B(\bx_k,\delta/2)\cap \R^d\,\big)}\\
&=(2m+3)^d-(2m-1)^d\\
&\le (5^d-1)\,m^{d-1},
\end{align*}
which implies
\begin{equation*}
A_j \le  1+\sum_{m=1}^{\infty}(5^d-1)m^{d-1}f(m\delta)\,.
\end{equation*}

As $f$ is monotone decreasing on $[0,\infty)$,
\begin{align*}
\int_{0}^{\infty}f(t\delta)t^{d-1}dt &=\sum_{m=1}^{\infty}\int_{m-1}^{m}f(t\delta)t^{d-1}dt\\
&\ge \sum_{m=1}^{\infty} f(m\delta)\left[\frac{m^{d}-(m-1)^d}{d}\right]\\
&\ge\sum_{m=1}^{\infty} f(m\delta)\frac{m^{d-1}}{d},
\end{align*}
which yields
$$ \sum_{m=1}^{\infty}f(m\delta)m^{d-1}\le \frac{d}{\delta^{d}}\int_{0}^{\infty}f(t)t^{d-1}dt.$$
Inserting this estimate into the above sum, we are led to
\begin{align*}
A_j \le 1+\frac{d(5^d-1)}{\delta^d}\int_{0}^{\infty}f(t)t^{d-1}dt\,.
\end{align*}

Since this estimate is independent of $j$, the result follows by Schur's test.

\begin{remark}
By Schur's test, (\ref{A}) implies
\begin{equation*}
\left\|I-S_X(f)\right\|\le \frac{d(5^d-1)}{[\delta(X)]^d}\int_{0}^{\infty}f(t)t^{d-1}dt
\end{equation*}
and the right side is strictly less than $1$ if
\begin{equation*}
\delta(X)>\left[d(5^d-1)\int_{0}^{\infty}f(t)t^{d-1}dt\right]^{1/d}\,.
\end{equation*}
For such a set $X$, $S_X(f)$ defines a bounded invertible operator on $\ell^{2}(\mathbb{N})$.
\end{remark}
\bigskip

\noindent
{\bf Acknowledgements.} Yong-Kum Cho is supported by National Research Foundation of Korea Grant
funded by the Korean Government (\# 20150301). Hera Yun is supported by the Chung-Ang University
Research Scholarship Grants in 2014.

\bigskip

\bigskip
\noindent
Yong-Kum Cho

\noindent
Department of Mathematics, Chung-Ang University, 84 Heukseok-Ro, Dongjak-Gu, Seoul 156-756, Korea (e-mail: ykcho@cau.ac.kr)

\bigskip
\noindent
Dohie Kim

\noindent
Department of Mathematics, Chung-Ang University, 84 Heukseok-Ro, Dongjak-Gu, Seoul 156-756, Korea (e-mail: hanna927@hanmail.net)

\bigskip
\noindent
Kyungwon Park

\noindent
Department of Computer Engineering, Korea Polytechnic University,
237 Sangidaehak-Ro, Siheung-Si 429-793, Korea (e-mail: chrisndanny@kpu.ac.kr)

\bigskip
\noindent
Hera Yun

\noindent
Department of Mathematics, Chung-Ang University,
84 Heukseok-Ro, Dongjak-Gu, Seoul 156-756, Korea (e-mail: herayun06@gmail.com)

\end{document}